\theoremstyle{definition}
\newtheorem{definition}{Definition}
\theoremstyle{plain}
\newtheorem{theorem}{Theorem}
\newtheorem{lemma}{Lemma}
\newtheorem{corollary}{Corollary}
\theoremstyle{remark}
\title{On the Optimal Combination of Tensor Optimization Methods}
\newcommand{\beq}{\begin{equation}}
\newcommand{\eeq}{\end{equation}}
\author{
Dmitry Kamzolov \textsuperscript{a}
Alexander Gasnikov\textsuperscript{a,b,c} and Pavel Dvurechensky\textsuperscript{c,d}.\\
\textsuperscript{a}Moscow Institute of Physics and Technology, Moscow, Russia;\\
\textsuperscript{b} National Research University Higher School of Economics, Moscow, Russia;\\ 
\textsuperscript{c}Institute for Information Transmission Problems RAS, Moscow, Russia;\\
\textsuperscript{d} Weierstrass Institute for Applied Analysis and Stochastics, Berlin, Germany. 
}
\begin{document}

\maketitle

\begin{abstract}
  We consider the minimization problem of a sum of a number of functions having Lipshitz $p$-th order derivatives with different Lipschitz constants. In this case, to accelerate optimization, we propose a general framework allowing to obtain near-optimal oracle complexity for each function in the sum separately, meaning, in particular, that the oracle for a function with lower Lipschitz constant is called a smaller number of times. As a building block, we extend the current theory of tensor methods and show how to generalize near-optimal tensor methods to work with inexact tensor step.
Further, we investigate the situation when the functions in the sum have Lipschitz derivatives of a different order. For this situation, we propose a generic way to separate the oracle complexity between the parts of the sum. Our method is not optimal, which leads to an open problem of the optimal combination of oracles of a different order.
\end{abstract}

\section{Introduction}
Higher-order (tensor) methods, which use the derivatives of the objective up to order $p$, recently have become an area of intensive research effort in optimization, despite the idea is quite old and goes back to the works of P.~Chebyshev and L.~Kantorovich (\cite{chebyshev2018full} and \cite{kantorovich1949newton}). One of the reasons is that the lower complexity bounds were obtained in \cite{arjevani2019oracle,agarwal2017lower,nesterov2019implementable}, which opened a question of optimal methods, and it was shown in \cite{nesterov2019implementable} that Taylor expansion of a convex function can be made convex by appropriate regularization, leading to tractable tensor step implementable in practice. Recently nearly optimal methods were obtained in \cite{nesterov2019implementable,gasnikov2019near}, and extensions for H\"older continuous higher-order derivatives were proposed in \cite{grapiglia2019tensor,song2019towards}. In this paper, we consider an interesting question that is still open in the theory of tensor methods. \textit{Namely, if a tensor method minimizes a function $f$ up to accuracy $\varepsilon$ in $N_f(\varepsilon)$ oracle calls and possibly another tensor method minimizes a function $g$ in $N_g(\varepsilon)$ oracle calls, is it possible to combine these two methods to minimize $f+g$ up to accuracy $\varepsilon$ in $\tilde{O}(N_f(\varepsilon))$ oracle calls for $f$ and $\tilde{O}(N_g(\varepsilon))$ oracle calls for $g$?} To say more, we would like to have a generic approach which can take as an input different particular algorithms for each component. For simplicity, we consider a sum of two functions, but we believe that the approach can be generalized for an arbitrary number of functions. 
Note that in the last few years, the answer to this question plays a crucial role in the development of optimal algorithms for convex decentralized distributed optimization \cite{lan2018communication,Lan2019lectures,dvinskikh2019decentralized,gorbunov2019optimal,beznosikov2019derivative,rogozin2019projected}.

Some results in this direction are known for the first-order methods $p=1$
\cite{lan2016gradient,lan2016accelerated,Lan2019lectures,beznosikov2019derivative,dvinskikh2019accelerated} 
and for the case of the sum of two functions with the second being so simple that it can be incorporated directly in the tensor step \cite{jiang2018optimal} like in composite first-order methods \cite{nesterov2013gradient}. Yet, the general theory on how to combine different methods to obtain optimal complexity for tensor methods is not yet developed for $p \ge 2$.

First, we consider uniformly convex sum of two functions $f + g$, each having Lipschitz derivatives of the same order $p$. Our approach is based on the recent framework of near-optimal tensor methods \cite{gasnikov2019near}, which extends the algorithm of \cite{monteiro2013accelerated} to tensor methods. 
Our idea is to apply the near-optimal tensor method to the sum, considering $g$ as a composite and including it into the tensor step without its Taylor approximation. Then each tensor step requires to solve properly regularized uniformly convex auxiliary problem. This is again done by the nearly optimal tensor method.
Since the auxiliary problem turns out to be very well conditioned, it is possible to solve it very fast, and we only need to call the oracle for $g(x)$. The careful analysis allows to separate the oracle complexity as we call the oracle for $f(x)$ only on outer iterations and oracle for $g(x)$ only on the inner, resulting in the optimal number of oracle calls for $f$ and for $g$ separately.
As a building block, we explain how to extend near-optimal tensor methods to work with inexact tensor step, extending the current theory since existing near-optimal methods assume that the tensor step is exact. If the function is not uniformly convex, one can use a standard regularization technique with a small regularization parameter.

Note, there exist several accelerated envelopes that allow accelerating tensor methods: Monteiro--Svaiter envelop \cite{monteiro2013accelerated,nesterov2018lectures,gasnikov2019optimal,jiang2018optimal,bubeck2018near}, Doikov--Nesterov envelope \cite{doikov2019minimizing} envelope.
Further, we will use Monteiro--Svaiter envelope. Note that it seems that Doikov--Nesterov envelop and standard direct Nesterov's tensor acceleration \cite{nesterov2019implementable} does not well suited for our purposes. Note also that for all envelopes for the moment, it's not known with what accuracy we should solve the auxiliary problem. In Monteiro--Svaiter envelop we working on this in Appendix~\ref{sec:inexact_sub}. Among different variants of Monteiro--Svaiter envelop, we preferred variant from \cite{bubeck2018near}, but we generalize (see Appendixes) \cite{bubeck2018near} on the composite case \cite{jiang2018optimal} and uniformly convex problem target functions \cite{gasnikov2019optimal}. 

Second, we consider the case when $f$ and $g$ have Lipshitz derivatives of different orders $p_f$ and $p_g$, respectively. We apply a similar technique as above, but using non-accelerated tensor methods as building blocks. We demonstrate that in this case, complexities can also be separated, but they turn out to be not optimal. This states an open problem of an optimal combination of optimal methods that use oracles of a different order.

As far as we know for the moment, there exists only one optimal result concerns the methods of different orders. This is the result from \cite{beznosikov2019derivative}, where authors considered sliding of optimal $0$-order and $1$-order methods.

\section{Problem Statement and Preliminaries}
In what follows, we work in a finite-dimensional linear vector space $E$. Its dual space, the space of all linear functions on $E$, is denoted by $E^{\ast}$. For $x\in E$ and $s\in E^{\ast}$, we denote by $\left\langle s,x \right\rangle$ the value of a linear function $s$ at $x$. For the (primal) space $E$, we introduce a norm $\|\,\cdot\,\|_E$. Then the dual norm is defined in the standard way:
$$
\|s\|_{E^\ast}=\max\limits_{x\in E} \left\lbrace \left\langle s,x \right\rangle: \, \|x\|_E\leq 1 \right\rbrace.
$$
Finally, for a convex function $f: \textbf{dom } f \rightarrow R$ with $\textbf{dom } f \subseteq E$ we denote by $\nabla f(x)\in E^{\ast}$ one of its subgradients.

We consider the following convex optimization problem:
\begin{equation}
\min\limits_{x\in E}  F(x)= f(x)+g(x),
\label{eq_pr}
\end{equation} 
where $f(x)$ and $g(x)$ are convex functions with Lipschitz $p$-th derivative, it means that
\begin{equation}
    \|D^p f(x)- D^p f(y)\|\leq L_{p,f}\|x-y\|.
    \label{def_lipshitz}
\end{equation}
Then Taylor approximation of function $f(x)$ can be written as follows:
\begin{equation}
    \Omega_{p}(f,x;y)=f(x)+\sum_{k=1}^{p}\frac{1}{k!}D^{k}f(x)\left[ y-x \right]^k, y\in E
    \label{eq_taylor}
\end{equation}
By \eqref{def_lipshitz} and the standard integration we can get next inequality
\begin{equation}
    |f(y)-\Omega_{p}(f,x;y)|\leq \frac{L_{p,f}}{(p+1)!}\|y-x\|^{p+1}.
    \label{eq_sumup}
\end{equation}

Now we introduce an additional condition for the functions. 
\begin{definition}\label{r-uniform}
Function $F(x)$ is $r$-uniformly convex ($r \geq 2$)  if
\begin{equation*}
    F(y)\geq F(x) + \la \nabla F(x), y-x \ra + \frac{\sigma_r}{r} \|y-x\|^r, \quad \forall x,y \in E
\end{equation*}
with constant $\sigma_r$
\end{definition}

One of the main examples of $r$-uniformly convex functions is $\frac{1}{r}\|x\|^r$ from Lemma 5 \cite{doikov2019minimizing}.

\begin{lemma}
\label{uniformly_lemma}
For fixed $r \geq 2$, consider the following function:
\begin{equation*}
    f_r(x) = \frac{1}{r} \|x\|^r, \quad x \in \mathbb{E}.
\end{equation*}
Function $f_r(x)$ is uniformly convex of degree $r$ with $\sigma_r=2^{2-r}$. 
\end{lemma}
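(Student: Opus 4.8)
The plan is to reduce the statement to a one–dimensional (scalar) inequality along the segment $[x,y]$, and then to prove that scalar inequality by elementary calculus. First I would recall the standard characterization: for a differentiable convex function, $r$-uniform convexity with constant $\sigma_r$ is equivalent to the monotonicity-type estimate
\begin{equation*}
\la \nabla f_r(y) - \nabla f_r(x), y-x \ra \ge \sigma_r \|y-x\|^r \cdot c_r,
\end{equation*}
(for an appropriate dimensional constant $c_r$), or alternatively to the midpoint/convex-combination form. Here $\nabla f_r(x) = \|x\|^{r-2} x$ (interpreting $\nabla\|x\| $ via the duality mapping when the norm is not Euclidean; for a general norm one works with a selection $\phi(x)\in\partial\|x\|$ and writes $\nabla f_r(x)=\|x\|^{r-1}\phi(x)$). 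So the first key step is to fix which equivalent definition to verify; I would verify the defining inequality in Definition~\ref{r-uniform} directly, i.e. show
\begin{equation*}
\tfrac1r\|y\|^r \ge \tfrac1r\|x\|^r + \|x\|^{r-2}\la x, y-x\ra + \tfrac{\sigma_r}{r}\|y-x\|^r .
\end{equation*}

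The second key step is the scalar reduction. Set $a=\|x\|$, $b=\|y\|$, and let $t=\|y-x\|$; by the triangle inequality $|a-b|\le t$. Using convexity of $\|\cdot\|$ one bounds the linear term $\|x\|^{r-2}\la x,y-x\ra$ from above by $\|x\|^{r-2}(\|x\|\,\|y\| - \|x\|^2)=a^{r-1}b - a^r$ is the wrong direction, so instead I would use $\la x, y-x\ra \le \|x\|(\|y\|-\|x\|)$ only when that helps and otherwise keep the inner product and invoke that $\la \phi(x), y-x\ra$ is subgradient-like. Cleaner: it suffices to prove the homogeneous scalar inequality
\begin{equation*}
\tfrac1r b^r \ge \tfrac1r a^r + a^{r-1}(b-a) + \tfrac{\sigma_r}{r} (a+b)^r \quad ?
\end{equation*}
— no; the right route is to bound $\|y-x\|^r \le (a+b)^r$ is too lossy. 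The correct and standard trick (this is the one Doikov--Nesterov use) is: the function $h(t)=\frac1r|t|^r$ on $\mathbb R$ is $r$-uniformly convex with constant $1$ in the sense $h(t_2)\ge h(t_1)+h'(t_1)(t_2-t_1)+\frac1r 2^{-(r-1)}|t_2-t_1|^r$ is false too; one actually gets constant $2^{2-r}$, which is precisely the claimed $\sigma_r$. So the plan is: (i) prove the one-dimensional statement $\frac1r|b|^r \ge \frac1r|a|^r + \mathrm{sign}(a)|a|^{r-1}(b-a) + \frac{2^{2-r}}{r}|b-a|^r$ by a direct computation — reduce by homogeneity to $a=1$ and study $\psi(b)=\frac1r|b|^r - \frac1r - (b-1) - \frac{2^{2-r}}{r}|b-1|^r$, show $\psi\ge 0$ via $\psi(1)=0$, $\psi'(1)=0$ and a sign analysis of $\psi''$; (ii) lift to the normed space using the inequality $\|u+v\|^r + \|u-v\|^r \ge 2\|u\|^r$ type relations, i.e. the fact (again Lemma~5 of \cite{doikov2019minimizing}) that for any norm $\|\lambda u + (1-\lambda)v\|^r \le \lambda\|u\|^r+(1-\lambda)\|v\|^r - \frac{2^{2-r}}{r}\cdot\frac{?}{}$ — concretely, combine the scalar estimate applied to $g(\lambda)=\frac1r\|x+\lambda(y-x)\|^r$ with the subadditivity/convexity of $\|\cdot\|$ to transfer the uniform-convexity modulus.

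The third step is bookkeeping of the constant: I expect the $2^{2-r}$ to emerge as $\inf$ over the scalar problem of the ratio (deficit in convexity)/(increment$^r$), attained in the limit of a symmetric configuration (e.g. the convex combination with $\lambda=1/2$ and antipodal points), which is exactly where the factor $2^{2-r}=2\cdot (1/2)^{r-1}$ comes from — note $2^{2-r}=2^2/2^r$ and $\|\tfrac12 u - \tfrac12(-u)\|^r = \|u\|^r$ against $\tfrac12\|u\|^r+\tfrac12\|u\|^r$ gives the tight case. So I would (a) establish the midpoint form $\tfrac1r\|\tfrac{x+y}2\|^r \le \tfrac12\cdot\tfrac1r\|x\|^r + \tfrac12\cdot\tfrac1r\|y\|^r - \tfrac{\sigma_r}{r}\cdot\tfrac12\cdot\tfrac12\|x-y\|^r$ with $\sigma_r=2^{2-r}$, then (b) invoke the standard equivalence between the midpoint modulus of convexity and the gradient/definition form (Definition~\ref{r-uniform}), which costs only universal constants already absorbed into $2^{2-r}$.

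The main obstacle I anticipate is handling a \emph{general} (non-Euclidean) norm: the gradient $\nabla f_r$ is then set-valued on the set where $x=0$ is irrelevant but the duality map $\phi(x)\in\partial\|x\|$ must be handled carefully, and the clean Euclidean identities ($\|x\|^2$ expansions) are unavailable. The fix is to avoid differentiating the norm altogether: work purely with the three scalars $\|x\|,\|y\|,\|x-y\|$ and the single inequality $\|\theta x+(1-\theta)y\| \le \theta\|x\|+(1-\theta)\|y\|$, reducing everything to the scalar lemma on $\mathbb R$ where the constant $2^{2-r}$ is sharp and checkable by one derivative computation. Since the paper explicitly cites this as "Lemma 5 of \cite{doikov2019minimizing}", I would in the write-up either reproduce that short scalar argument or simply cite it; the honest minimal proof is the scalar computation plus the convexity-of-norm transfer, and the only quantitative care needed is tracking that $2^{2-r}$ is exactly the scalar modulus, not an over-optimistic constant.
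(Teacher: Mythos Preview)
The paper does not prove this lemma at all: it is stated verbatim as ``Lemma 5 of \cite{doikov2019minimizing}'' and then used. So there is nothing to compare your argument to on the paper's side beyond a citation; your plan is strictly more ambitious than what the paper actually does, and in your last paragraph you already note that simply citing Doikov--Nesterov would suffice.

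That said, your self-contained plan has a real gap in the ``lifting'' step. The one-dimensional inequality
\[
\tfrac1r|b|^r \;\ge\; \tfrac1r|a|^r + |a|^{r-2}a\,(b-a) + \tfrac{2^{2-r}}{r}|b-a|^r
\]
is correct (and sharp at $a=1$, $b=-1$), and your homogeneity-plus-second-derivative sketch for it is fine. But the transfer to a normed space $E$ cannot be done, as you propose, using only the triangle inequality/convexity of $\|\cdot\|$. If that worked, $\frac1r\|x\|^r$ would be $r$-uniformly convex for \emph{every} norm, which is false (take $\|\cdot\|_\infty$ on $\mathbb{R}^2$: $f_r$ is constant on segments of the unit sphere, so no positive modulus is possible). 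The three scalars $\|x\|,\|y\|,\|x-y\|$ and subadditivity simply do not carry enough information; your several crossed-out attempts in the proposal are symptoms of this. The Doikov--Nesterov lemma is stated for the Euclidean norm, and its proof genuinely uses Euclidean structure --- either via the Hessian bound $\nabla^2 f_r(x)\succeq \|x\|^{r-2} I$ and integrating along the segment, or via the classical monotonicity inequality
\[
\big\langle \|x\|^{r-2}x-\|y\|^{r-2}y,\;x-y\big\rangle \;\ge\; 2^{2-r}\|x-y\|^r,
\]
which itself reduces to the scalar case only after an orthogonal decomposition that is unavailable for a general norm. So either restrict to the Euclidean setting and use one of these two devices, or do what the paper does and cite the source.
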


Problem \eqref{eq_pr} can be solved by tensor methods \cite{nesterov2019implementable} or its accelerated versions \cite{nesterov2018lectures}, \cite{bubeck2018near}, \cite{jiang2018optimal},  \cite{gasnikov2019near}  . 
This methods have next basic step:
\begin{equation*}
    \label{RCTMforFG}
T_{H}(x) = \argmin_{y} \lb \Omega_{p}(f+g,x;y) + \frac{H_p}{p!}\|y - x \|^{p+1} \rb .   
\end{equation*}
For $H_p\geq L_p$ this subproblem is convex and hence implementable. Note that this method does not use information about sum type problem and compute their derivatives the same number of times. We want to separate computation complexity of high-order derivatives for sum of two functions. 
In next section we will describe this idea in more details. 
 
As an accelerated optimal method, we introduce Accelerated Taylor Descent (ATD) from \cite{bubeck2018near}. But for our paper we need to get a composite variant of ATD. 

\begin{algorithm} [h!]
\caption{Composite Accelerated Taylor Descent}\label{alg:highorder}
	\begin{algorithmic}[1]
		\STATE \textbf{Input:} convex function $f : \R^d \rightarrow \R$ such that $\nabla^p f$ is $L_p$-Lipschitz, proper closed convex $g : \R^d \rightarrow \R$.
		\STATE Set $A_0 = 0, x_0 = y_0$
		\FOR{ $k = 0$ \TO $k = K- 1$ }
		\STATE Compute a pair $\lambda_{k+1} > 0$ and $y_{k+1}\in \R^d$ such that
		\[
\frac{1}{2} \leq \lambda_{k+1} \frac{H_{p,f} \cdot \|y_{k+1} - \tilde{x}_k\|^{p-1}}{(p-1)!}  \leq \frac{p}{p+1} \,,
\]
where
\begin{equation}
\label{prox_step}
y_{k+1} = \argmin_y \left\{
\Omega_{p}(f,\tilde{x}_k;y)+\frac{H_{p,f}}{p!}\|y-\tilde{x}_k\|^{p+1} +g(y)\right\} \,,
\end{equation}
and
		\[
a_{k+1} = \frac{\lambda_{k+1}+\sqrt{\lambda_{k+1}^2+4\lambda_{k+1}A_k}}{2} 
\text{ , } 
A_{k+1} = A_k+a_{k+1}
\text{ , and } 
\tilde{x}_k = \frac{A_k}{A_{k + 1}}y_k + \frac{a_{k+1}}{A_{k+1}} x_k \,.
		\]
		\STATE Update $x_{k+1} := x_k-a_{k+1} \nabla f(y_{k+1}) - a_{k+1}g'(y_{k+1})$
		\ENDFOR
		\RETURN $y_{K}$ 
	\end{algorithmic}	
\end{algorithm}

Algorithm \ref{alg:highorder} is a generalization of ATD from \cite{bubeck2018near} for composite optimization problem. It means, that we try to minimize sum of two functions $F(x)=f(x)+g(x)$, where $g(x)$ is a proper closed convex function and subproblem \eqref{prox_step} with $g(x)$ is easy to solve. 
Note that if $g(x)$ smooth and has a gradient, so $g'(y_{k+1})=\nabla g(y_{k+1})$, but if $g(x)$ has only subgradient, we should introduce $g'(y_{k+1})$. Similarly to (2.9) from \cite{doikov2019local} by using optimality condition for \eqref{prox_step} we define
\begin{equation*}
    g'(y_{k+1})= - \nabla  \Omega_{p}(f,\tilde{x}_k;y_{k+1})-\frac{(p+1)H_{p,f}}{p!}\|y_{k+1}-\tilde{x}_k\|^{p-1}(y_{k+1}-\tilde{x}_k) 
\end{equation*}

\begin{theorem} \label{theoremCATD}
 Let $F(x)=f(x)+g(x)$, where $f$ denote a convex function whose $p^{th}$ derivative is $L_p$-Lipschitz, $g(x)$ is a proper closed convex function and let $x_{\ast}$ denote a minimizer of $F$. Then CATD satisfies, with $c_p = 2^{p-1} (p+1)^{\frac{3p+1}{2}} / (p-1)!$,
 \begin{equation} \label{speedCATD}
 F(y_k) - F(x_{\ast}) \leq \frac{c_p L_p R^{p+1}}{k^{\frac{ 3p +1}{2}}} \,,
 \end{equation}
 where 
\begin{equation}
\label{def_diameter}
R=\|x_0 - x^{\ast}\|
\end{equation} 
is the maximal radius of the initial set.
 Furthermore each iteration of ATD can be implemented in $\tilde{O}(1)$ calls to a $p^{th}$-order Taylor expansion oracle, where $\tilde{O}$ means up to logarithmic factors.
\end{theorem}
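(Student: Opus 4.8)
The plan is to view Algorithm~\ref{alg:highorder} as an instance of the Monteiro--Svaiter accelerated proximal envelope in which the inexact proximal step is realized by the composite regularized Taylor step~\eqref{prox_step}, and then to run the analysis of \cite{bubeck2018near} in the composite form of \cite{jiang2018optimal}. Concretely, I would proceed in four steps: (1) check that $(\lambda_{k+1},y_{k+1})$ is an admissible inexact proximal step for $F=f+g$; (2) run the Monteiro--Svaiter potential argument to get $F(y_k)-F(x_\ast)\le R^2/(2A_k)$ together with a telescoped residual estimate; (3) lower-bound $A_k$ using the line-search window; (4) bound the per-iteration cost.

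For step (1), first-order optimality for the convex problem~\eqref{prox_step} shows that the vector $g'(y_{k+1})$ defined just before the theorem lies in $\partial g(y_{k+1})$, so $v_{k+1}:=\nabla f(y_{k+1})+g'(y_{k+1})\in\partial F(y_{k+1})$, and it is exactly the direction used in line~5. A short computation then gives
\begin{equation*}
\lambda_{k+1}v_{k+1}+(y_{k+1}-\tilde x_k)=\lambda_{k+1}\bigl(\nabla f(y_{k+1})-\nabla\Omega_{p}(f,\tilde x_k;y_{k+1})\bigr)+\Bigl(1-\tfrac{(p+1)H_{p,f}}{p!}\lambda_{k+1}\|y_{k+1}-\tilde x_k\|^{p-1}\Bigr)(y_{k+1}-\tilde x_k).
\end{equation*}
Bounding the first term by the gradient form of~\eqref{eq_sumup}, namely $\|\nabla f(y)-\nabla\Omega_{p}(f,x;y)\|\le\frac{L_{p}}{p!}\|y-x\|^{p}$, and the scalar multiplier in the second term via the two-sided window $\tfrac12\le\lambda_{k+1}\tfrac{H_{p,f}\|y_{k+1}-\tilde x_k\|^{p-1}}{(p-1)!}\le\tfrac{p}{p+1}$ (the upper bound keeps that multiplier nonnegative, the lower bound keeps it $\le\tfrac{p-1}{2p}$), one obtains $\|\lambda_{k+1}v_{k+1}+(y_{k+1}-\tilde x_k)\|\le\theta\,\|y_{k+1}-\tilde x_k\|$ with $\theta<1$ whenever $H_{p,f}\ge L_{p}$ (e.g. $\theta=\tfrac{p-1}{2p}+\tfrac1{p+1}$ when $H_{p,f}=L_p$). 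This is exactly the inexactness the Monteiro--Svaiter envelope tolerates.

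For steps (2)--(3), put $\Psi_k=A_k\bigl(F(y_k)-F(x_\ast)\bigr)+\tfrac12\|x_k-x_\ast\|^2$. Using convexity of $F$ through $v_{k+1}$, the identities $A_{k+1}=A_k+a_{k+1}$, $a_{k+1}^2=\lambda_{k+1}A_{k+1}$, $A_{k+1}\tilde x_k=A_ky_k+a_{k+1}x_k$, the update $x_{k+1}=x_k-a_{k+1}v_{k+1}$, and one completion of squares, one gets $\Psi_{k+1}-\Psi_k\le\frac{A_{k+1}}{2\lambda_{k+1}}\bigl(\theta^2-1\bigr)\|y_{k+1}-\tilde x_k\|^2\le0$. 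Hence $\Psi_k\le\Psi_0=\tfrac12R^2$, which yields $F(y_k)-F(x_\ast)\le R^2/(2A_k)$ and, by telescoping, $\sum_{i=1}^{k}\frac{A_i}{\lambda_i}\|y_i-\tilde x_{i-1}\|^2\le R^2/(1-\theta^2)$. From $A_{k+1}=A_k+a_{k+1}$ and $a_{k+1}^2=\lambda_{k+1}A_{k+1}$ one gets $\sqrt{A_k}\ge\tfrac12\sum_{i=1}^{k}\sqrt{\lambda_i}$, and the window gives $\lambda_i\asymp(p-1)!/\bigl(H_{p,f}\|y_i-\tilde x_{i-1}\|^{p-1}\bigr)$. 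Feeding these two facts and the telescoped estimate into the sequence/H\"older lemma of \cite{bubeck2018near} — the step that makes quantitative the dichotomy ``$\|y_i-\tilde x_{i-1}\|$ large $\Rightarrow$ large drop in $\Psi$, $\|y_i-\tilde x_{i-1}\|$ small $\Rightarrow$ $\lambda_i$, hence $A_k$, large'' — produces $A_k=\Omega\bigl(H_{p,f}^{-1}R^{-(p-1)}k^{(3p+1)/2}\bigr)$; taking $H_{p,f}=L_p$ and collecting constants gives~\eqref{speedCATD} with the stated $c_p$. I expect this last step to be the main obstacle: it is where the interplay between the growth of $A_k$ and the telescoped residual sum has to be balanced precisely, and where the exponent $\tfrac{3p+1}{2}$ actually emerges. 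The composite structure itself is harmless here, since it enters only through $v_{k+1}\in\partial F$ and does not affect this argument.

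Finally, for step (4): for fixed $k$ the scalar map $\lambda\mapsto\lambda\,\tfrac{H_{p,f}\|y_{k+1}(\lambda)-\tilde x_k(\lambda)\|^{p-1}}{(p-1)!}$, in which $a_{k+1},\tilde x_k,y_{k+1}$ are all determined by $\lambda$, is continuous and monotone, so the required window is reached by a binary search on $\lambda$ using $\tilde O(1)$ evaluations of~\eqref{prox_step}, each one being a single call to a $p$-th order Taylor-expansion oracle for $f$ together with one solve of the convex composite subproblem; the detailed search is deferred to the appendix.
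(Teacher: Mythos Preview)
Your proposal is correct and follows essentially the same route as the paper: both cast Algorithm~\ref{alg:highorder} as a composite Monteiro--Svaiter scheme, verify the inexact-proximal condition~\eqref{eq:igdrefined} from the optimality condition of~\eqref{prox_step} together with the Taylor gradient error and the two-sided window on $\lambda_{k+1}$, run the potential argument to get $F(y_k)-F(x_\ast)\le R^2/(2A_k)$ plus the telescoped residual bound, and then invoke the $A_k$-growth and line-search lemmas of \cite{bubeck2018near} unchanged. The only cosmetic differences are that the paper phrases step~(2) via estimate sequences $\psi_k$ rather than your direct Lyapunov function $\Psi_k$, and in step~(1) it bounds both terms simultaneously as $\eta/p+|\eta(p{+}1)/p-1|$ to get exactly $\sigma=1/2$, whereas your separate bounds on the two terms give the slightly looser $\theta=\tfrac{p-1}{2p}+\tfrac{1}{p+1}$; tightening this is needed if you want the stated constant $c_p$ rather than a constant depending on $(1-\theta^2)^{-1}$.
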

We prove this theorem similarly to the proof of \cite{bubeck2018near} in Appendix \ref{sec:apCATD}. 

Now we assume that function $F(x)$ is additionally $r$-uniformly convex, hence we may get a speed up by using restarts. We formulate method and theorem for CATD with restarts.
\begin{algorithm} [h!]
\caption{CATD with restarts}\label{alg:restarts}
	\begin{algorithmic}[1]
		\STATE \textbf{Input:} $r$-unformly convex function $F : \R^d \rightarrow \R$ with constant $\sigma_r$ and CATD conditions.
		\STATE Set $z_0=x_0=0$ and $R_0 = \|z_0 - x_{\ast} \|$.
		\FOR{$k = 0, $ \TO $K$}
		\STATE Set $R_k=R_0\cdot 2^{-k}$ and
		\begin{equation}
		\label{numberofrestarts}
		    N_k=\max \lb \left\lceil \ls \frac{r c_p L_p 2^r}{\sigma_r} R_k^{p+1-r} \rs^{\frac{2}{3p+1}} \right\rceil, 1\rb.
		\end{equation}
		\STATE Set $z_{k+1} := y_{N_k}$ as the output of CATD started from $z_k$ and run for $N_k$ steps.
		\ENDFOR
		\RETURN $z_{K}$ 
	\end{algorithmic}	
\end{algorithm}

\begin{theorem}
\label{CATDrestarts}
CATD with restarts for $r$-uniformly convex function $F$ with constant $\sigma_r$ converges with $N_r$ steps of CATD per restart and with $N_F$ total number of CATD steps, where
\begin{equation}
    N_F = \tilde{O} \left[\left( \frac{L_{p,f} R^{p+1-r}}{\sigma_r} \right)^{\frac{2}{3p+1}}\right].
\end{equation}
\end{theorem}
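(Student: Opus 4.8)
The plan is to derive the restart complexity in the standard way: first use Theorem~\ref{theoremCATD} to control how much one run of CATD shrinks the distance to the optimum given the $r$-uniform convexity, then sum the per-restart iteration counts $N_k$ defined in \eqref{numberofrestarts}, and finally check that after $K$ restarts the accuracy is $\varepsilon$, with $K = \tilde O(1)$.

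First I would set up the key one-restart estimate. By Definition~\ref{r-uniform} applied at $x=x_\ast$ (where $\nabla F(x_\ast)=0$), we have $\frac{\sigma_r}{r}\|z-x_\ast\|^r \leq F(z) - F(x_\ast)$ for every $z$. Suppose CATD is started from a point $z_k$ with $\|z_k - x_\ast\| \leq R_k$ and run for $N_k$ steps, producing $z_{k+1}$. Theorem~\ref{theoremCATD} gives $F(z_{k+1}) - F(x_\ast) \leq c_p L_{p,f} R_k^{p+1} / N_k^{(3p+1)/2}$. Combining the two bounds,
\begin{equation*}
\frac{\sigma_r}{r}\|z_{k+1}-x_\ast\|^r \leq \frac{c_p L_{p,f} R_k^{p+1}}{N_k^{(3p+1)/2}},
\end{equation*}
so $\|z_{k+1}-x_\ast\|^r \leq \frac{r c_p L_{p,f}}{\sigma_r} R_k^{p+1} N_k^{-(3p+1)/2}$. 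To guarantee $\|z_{k+1}-x_\ast\| \leq R_{k+1} = R_k/2$, i.e. $\|z_{k+1}-x_\ast\|^r \leq R_k^r 2^{-r}$, it suffices that $N_k^{(3p+1)/2} \geq \frac{r c_p L_{p,f} 2^r}{\sigma_r} R_k^{p+1-r}$, which is exactly the choice in \eqref{numberofrestarts} (the outer $\max$ with $1$ just ensures at least one step is taken, which is harmless since then the right-hand side is $\leq 1$ and the contraction still holds). By induction on $k$, starting from $R_0 = \|z_0 - x_\ast\|$, we get $\|z_k - x_\ast\| \leq R_0 2^{-k} = R_k$ for all $k$, hence $F(z_K)-F(x_\ast) \leq \frac{c_p L_{p,f}}{N_{K-1}^{(3p+1)/2}} R_{K-1}^{p+1}$ decays geometrically and $K = O(\log(1/\varepsilon))$ restarts suffice to reach accuracy $\varepsilon$; this is where the $\tilde O$ hides its logarithm. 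Since each of the $N_k$ CATD steps costs $\tilde O(1)$ Taylor-oracle calls by Theorem~\ref{theoremCATD}, the total oracle cost is $\tilde O$ of $\sum_{k=0}^{K-1} N_k$.

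The remaining step is to sum $\sum_k N_k$ and show it is $\tilde O\big((L_{p,f} R^{p+1-r}/\sigma_r)^{2/(3p+1)}\big)$ with $R = R_0$. Writing $\alpha = 2/(3p+1) \in (0,1]$ and ignoring constants, $N_k \asymp \big(\tfrac{L_{p,f}}{\sigma_r} R_k^{p+1-r}\big)^{\alpha} = \big(\tfrac{L_{p,f}}{\sigma_r}\big)^\alpha R_0^{(p+1-r)\alpha} 2^{-k(p+1-r)\alpha}$, so the sum over $k$ is a geometric series. Here one must split into cases according to the sign of $p+1-r$: if $p+1-r>0$ the series is dominated by the $k=0$ term $N_0$, giving exactly the claimed bound; if $p+1-r<0$ the series is dominated by the last term $N_{K-1}$, which is $R_{K-1}^{(p+1-r)\alpha} \asymp \varepsilon^{-\Theta(1)}$ up to the constant, and a more careful statement in terms of $\varepsilon$ is needed (this is the ``$\tilde O$'' in the uniformly convex regime and matches the fact that for $r<p+1$ the problem's effective conditioning worsens as one zooms in); if $p+1=r$ all $N_k$ are equal and the sum is $K \cdot N_0 = \tilde O(N_0)$. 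In every case the per-restart count is $N_r = N_0 = \tilde O\big((L_{p,f}R^{p+1-r}/\sigma_r)^{\alpha}\big)$ as stated, since $N_0$ is the largest (or tied-largest) of the $N_k$ when $p+1-r\ge 0$, which is the main regime of interest ($r\le p+1$, e.g. the quadratic-growth case $r=2$).

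The main obstacle is bookkeeping the $\varepsilon$-dependence cleanly: the theorem as stated suppresses it inside $\tilde O$, so the honest thing is to fix the number of restarts $K$ so that $R_K \le (\text{something involving }\varepsilon)$, verify the geometric-series bound in each sign regime for $p+1-r$, and confirm that the logarithmic number of restarts does not inflate the leading term beyond a log factor. A secondary subtlety is justifying that the induction hypothesis $\|z_k - x_\ast\|\le R_k$ is preserved even on restarts where the $\max$ in \eqref{numberofrestarts} is active (i.e. $N_k=1$), which as noted above follows because in that case the required contraction factor is already achieved by a single step.
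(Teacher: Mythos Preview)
Your argument is correct and follows essentially the same route as the paper's proof: use $r$-uniform convexity together with the CATD rate \eqref{speedCATD} to show that the choice \eqref{numberofrestarts} halves $R_k$ at each restart, then sum the $N_k$ as a geometric series with ratio $2^{-\frac{2(p+1-r)}{3p+1}}$. The only superfluous part is your case split on the sign of $p+1-r$: the paper works under the standing assumption $p+1\ge r$, so only the convergent-series case is needed (and your parenthetical ``for $r<p+1$'' in the divergent case should read $r>p+1$).
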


We prove this theorem similarly to \cite{gasnikov2019optimal} in Appendix \ref{sec:CATDproff}.

\section{Uniformly convex functions}
We consider similar to \eqref{eq_pr} problem.
\begin{equation}
\label{problem_uniformly}
    \min F(x) = f(x)+g(x),
\end{equation}
where additionally $F(x)$ is $r$-uniformly convex function. We also assume, that $p+1 \geq r$. 

If we will use Algorithm \ref{alg:restarts} for problem \eqref{problem_uniformly} we get next convergence speed. To reach $F(x_N) - F(x^{\ast}) \leq \e$, we need $N_f+N_g$ iterations, where
\begin{align}
    \label{N_f}
    &N_f = \tilde{O} \left[\left( \frac{L_{p,f} R^{p+1-r}}{\sigma_r} \right)^{\frac{2}{3p+1}}\right],\\
    \label{N_g}
    &N_g = \tilde{O} \left[\left( \frac{L_{p,g} R^{p+1-r}}{\sigma_r}
    \right)^{\frac{2}{3p+1}}\right].
\end{align}
Note that for this method we compute $N_f+N_g$ derivatives for both $f(x)$ and $g(x)$ functions. We want to separate this computations and compute $N_f$ derivatives for the function $f$ and $N_g$ derivatives for the function $g$. 

Next we will describe the our framework. We assume that $L_{p,f}<L_{p,g}$, it means that $N_f<N_g$. For that case we consider problem \ref{problem_uniformly} as a composite problem with $g(x)$ as a composite part. We solve this problem  by Algorithm \ref{alg:restarts}. In this algorithm we have tensor subproblem \eqref{prox_step}. To solve this subproblem we run another Algorithm \ref{alg:restarts} with objective function $\Omega_{p}(f,\tilde{x}_k;y)+\frac{H_{p,f}}{p!}\|y-\tilde{x}_k\|^{p+1} +g(y)$ up to the desired accuracy. As we will prove next, this subproblem may be solved linearly by the desired accuracy, so we should not worry too much about the level of the desired accuracy. We write more details about the correctness of this part and the more precise level of desired accuracy in Appendix \ref{sec:inexact_sub}. 
As a result we get Algorithm \ref{alg:sliding}.

\begin{algorithm} [h!]
\caption{Tensor Methods Combination}\label{alg:sliding}
	\begin{algorithmic}[1]
		\STATE \textbf{Input:} $r$-unformly convex function $F(x)=f(x)+g(x)$ with constant $\sigma_r$, convex functions $f(x)$ and $g(x)$ such that $\nabla^p f$ is $L_{p,f}$-Lipschitz and $\nabla^p g$ is $L_{p,g}$-Lipschitz.
		\STATE Set $z_0=y_0=x_0$
		\FOR{$k = 0, $ \TO $K-1$}
		\STATE Run Algorithm \ref{alg:restarts} for problem $f(x)+g(x)$, where $g(x)$ is a composite part.
		    \FOR {$m = 0, $ \TO $M-1$}
		    \STATE Run Algorithm \ref{alg:restarts} up to desired accuracy for subproblem 
		    \[\min\limits_{y}\ls\Omega_{p}(f,\tilde{x}_k;y)+\frac{H_{p,f}}{p!}\|y-\tilde{x}_k\|^{p+1} +g(y)\rs\]
		    \ENDFOR
		\ENDFOR
		\RETURN $z_{K}$ 
	\end{algorithmic}	
\end{algorithm}

Now we prove that this framework split computation's complexities.
\begin{theorem}
Assume $F(x)$ is $r$-uniformly convex function ($r\geq 2$), $f(x)$ and $g(x)$ are convex functions with  Lipshitz $p$-th derivative ($p \geq 1$, $p+1 \geq r$) and $L_{p,f}<L_{p,g}$. Then by using our framework with $H_{p,f}=2L_{p,f}$, method converges to $F(x_N)-F(x^{\ast})\leq \e$ with $N_f$ as \eqref{N_f} computations of derivatives $f(x)$  and $N_g$ as \eqref{N_g} computation of derivatives $g(x)$.
\end{theorem}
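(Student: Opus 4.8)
The plan is to run the two-level scheme of Algorithm~\ref{alg:sliding} and carefully account for how many times the oracle of each component is queried, invoking Theorem~\ref{CATDrestarts} at both levels. First I would treat problem~\eqref{problem_uniformly} as a composite problem with $g$ playing the role of the composite term, and apply Algorithm~\ref{alg:restarts} (CATD with restarts) to it. Since $F$ is $r$-uniformly convex and $f$ has $L_{p,f}$-Lipschitz $p$-th derivative, Theorem~\ref{CATDrestarts} gives that the outer method reaches accuracy $\varepsilon$ in
\[
N_f = \tilde{O}\left[\left(\frac{L_{p,f}R^{p+1-r}}{\sigma_r}\right)^{\frac{2}{3p+1}}\right]
\]
outer iterations, where each outer iteration calls the oracle of $f$ a $\tilde O(1)$ number of times (to form the Taylor polynomial $\Omega_p(f,\tilde x_k;\cdot)$ and to evaluate $\nabla f(y_{k+1})$) — the key point being that $g$ does \emph{not} enter the complexity of the outer loop because it is carried as a composite term. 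This is exactly the content of the composite convergence rate for CATD. Choosing $H_{p,f}=2L_{p,f}\ge L_{p,f}$ guarantees convexity (hence tractability) of the subproblem~\eqref{prox_step}.

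Next I would analyze the inner subproblem
\[
\min_y\; \psi_k(y) := \Omega_p(f,\tilde x_k;y) + \frac{H_{p,f}}{p!}\|y-\tilde x_k\|^{p+1} + g(y).
\]
The crucial observation is conditioning: the regularizer $\frac{H_{p,f}}{p!}\|y-\tilde x_k\|^{p+1}$ is $(p+1)$-uniformly convex with a constant proportional to $H_{p,f}=2L_{p,f}$, so $\psi_k$ is $(p+1)$-uniformly convex with $\sigma_{p+1}\asymp L_{p,f}$; at the same time, the part of $\psi_k$ other than $g$ is itself a degree-$p$ polynomial plus the explicit power, which one can handle with a tensor method whose $p$-th derivative Lipschitz constant is of order $L_{p,f}$ (the polynomial $\Omega_p$ has \emph{zero} $(p{+}1)$-st derivative, and the $\|\cdot\|^{p+1}$ term has Lipschitz $p$-th derivative of order $H_{p,f}$). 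Running Algorithm~\ref{alg:restarts} on $\psi_k$, now with $g$ as the ``target'' part and the polynomial-plus-power as the cheap part, Theorem~\ref{CATDrestarts} with $r\leftarrow p+1$ gives an inner iteration count of order
\[
M = \tilde O\!\left[\left(\frac{L_{p,g}\, \rho^{p+1-(p+1)}}{\sigma_{p+1}}\right)^{\frac{2}{3p+1}}\right] = \tilde O\!\left[\left(\frac{L_{p,g}}{L_{p,f}}\right)^{\frac{2}{3p+1}}\right],
\]
i.e. the subproblem is so well conditioned (the $R$-dependence disappears because $p+1-r=0$ for the inner problem) that it is solved to the required accuracy in a number of steps depending only logarithmically on the accuracy and polynomially on the ratio $L_{p,g}/L_{p,f}$, with each inner step costing $\tilde O(1)$ oracle calls to $g$ only. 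Here I would use the inexact-subproblem analysis referenced in Appendix~\ref{sec:inexact_sub} to argue that the inexactness in solving~\eqref{prox_step} propagates harmlessly — this is the only place the ``$\tilde O$'' absorbs the accuracy dependence, and it is legitimate precisely because linear (geometric) convergence on the inner problem makes the cost of an extra digit of accuracy only an additive $\log$.

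Finally I would multiply: the oracle of $f$ is called only on outer iterations, giving $\tilde O(1)\cdot N_f = \tilde O(N_f)$ calls, matching~\eqref{N_f}; the oracle of $g$ is called only on inner iterations, giving $\tilde O(1)\cdot N_f\cdot M$ calls. The remaining algebra is to check that $N_f\cdot M = \tilde O(N_g)$: substituting,
\[
N_f\cdot M = \tilde O\!\left[\left(\frac{L_{p,f}R^{p+1-r}}{\sigma_r}\right)^{\frac{2}{3p+1}}\left(\frac{L_{p,g}}{L_{p,f}}\right)^{\frac{2}{3p+1}}\right] = \tilde O\!\left[\left(\frac{L_{p,g}R^{p+1-r}}{\sigma_r}\right)^{\frac{2}{3p+1}}\right],
\]
which is exactly $N_g$ as in~\eqref{N_g}. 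I expect the main obstacle to be the rigorous handling of the inexact inner solve: one must show that the near-optimal (Monteiro--Svaiter-type) outer method tolerates an approximate tensor step of the form~\eqref{prox_step}, quantify how accurate $y_{k+1}$ must be (in function value and in the residual of the stationarity condition defining $g'(y_{k+1})$), and confirm that achieving this accuracy costs only $\tilde O(1)$ extra inner steps thanks to the linear convergence established above; this is the part deferred to Appendix~\ref{sec:inexact_sub}, and everything else is bookkeeping with Theorems~\ref{theoremCATD} and~\ref{CATDrestarts}.
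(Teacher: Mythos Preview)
Your proposal is correct and matches the paper's proof almost step for step: outer CATD with restarts on the composite problem yields $N_f$ outer iterations, the inner subproblem is $(p{+}1)$-uniformly convex so CATD with restarts solves it in $\tilde O\bigl((L_{p,g}/L_{p,f})^{2/(3p+1)}\bigr)$ inner steps, and multiplying gives $N_g$. The one refinement the paper makes explicit (and which your line ``the regularizer is uniformly convex, so $\psi_k$ is'' elides) is the split $H_{p,f}=L_{p,f}+L_{p,f}$: since $\Omega_p(f,\tilde x_k;\cdot)$ need not be convex on its own, one half of the regularizer is spent making $\Omega_p+\tfrac{L_{p,f}}{p!}\|\cdot\|^{p+1}$ convex, and only the remaining $\tfrac{L_{p,f}}{p!}\|\cdot\|^{p+1}$ supplies the uniform-convexity constant $\sigma_{p+1}=\tfrac{(p+1)L_{p,f}2^{2-p}}{p!}$ via Lemma~\ref{uniformly_lemma}.
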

\begin{proof}
As we prove in \ref{CATDrestarts} for the outer composite method with constant $H_{p,f}=2L_{p,f}$ we need to make 
\begin{equation*}
    N_{out} = \tilde{O}\left[\left( \frac{2p L_{p,f} R^{p+1-r}}{\sigma_r} \right)^{\frac{2}{3p+1}}\right]
\end{equation*}
outer steps, it means that we need to compute $N_{out}=N_f$ derivatives of $f(x)$.
Now we compute how much steps of inner method we need. Note that inner function has Lipshitz $p$-th derivative $H_{p,f}+L_g$. Also it is $(p+1)$-uniformly convex with $\sigma_{p+1}$. To compute $\sigma_{p+1}$ we need to split $H_{p,f}$ into two parts $H_{p,f}=H_1+H_2$, where the first part needs to make $\Omega_{p}(f,x;y) + \frac{H_1}{p!}\|y - x \|^{p+1}$ a convex function and the second part needs to make $\frac{H_2}{p!}\|y - x \|^{p+1}$ a uniformly convex term. Hence, from Lemma \ref{uniformly_lemma} we have $\sigma_{p+1}=\frac{H_2 (p+1) 2^{2-p}}{p!}$. We take $H_1=H_2=L_{p,f}$. As a result, the number of inner iterations equal to 
\begin{equation}
\label{N_INN}
\begin{aligned}
     N_{inn} &= \tilde{O}\left[\left( \frac{2L_{p,f}+L_{p,g}}{\frac{(p+1)L_{p,f}2^{2-p}}{p!}} \right)^{\frac{2}{3p+1}}\log \ls \frac{F(x_0)-F(x^{\ast})+H_{p,f}R^{p+1}}{\e} \rs \right]       \\
     &= \tilde{O}\left[\left( \frac{2L_{p,f}+L_{p,g}}{\frac{(p+1)L_{p,f}2^{2-p}}{p!}} \right)^{\frac{2}{3p+1}}\right]\overset{L_{p,f}<L_{p,g}}{=}\tilde{O}\left[\left( \frac{L_{p,g}}{L_{p,f}} \right)^{\frac{2}{3p+1}}\right]
\end{aligned}
\end{equation}
Hence the total number of inner iterations and total number of derivative's computations of $g(x)$ is 
\begin{equation*}
\begin{aligned}
    N_g&=N_{out}\cdot N_{inn}= \tilde{O}\left[\left( \frac{L_{p,f} R^{p+1-r}}{\sigma_r} \right)^{\frac{2}{3p+1}}\right]\cdot \tilde{O}\left[\left( \frac{L_{p,g}}{L_{p,f}} \right)^{\frac{2}{3p+1}}\right]\\
    &= \tilde{O}\left[\left( \frac{L_{p,g} R^{p+1-r}}{\sigma_r} \right)^{\frac{2}{3p+1}}\right].
    \end{aligned}
\end{equation*} 
So we prove the theorem and split computation complexities.
\end{proof}

Note, that this framework also easily adapts to methods without accelerating like \cite{nesterov2019implementable}, \cite{doikov2019local}. But, unfortunately, it is much harder to adapt for other acceleration schemes. As we know, it is possible to adapt this framework for speed ups from \cite{gasnikov2019optimal} and \cite{jiang2018optimal} for $p\geq 2$, but for $p=1$ it may arise some troubles because of adaptive inner regularisation and hence hard subproblem. As for \cite{nesterov2019implementable} acceleration it also hard to adapt, because the inner subproblem is much harder with increasing complexity. 

Also note that this framework can be generalized to the problem of the sum of $m$ functions. 

\section{General convex functions}\label{convex}
We consider \eqref{eq_pr} problem for convex functions.

If we will use Algorithm \ref{alg:highorder} for problem \eqref{eq_pr} we get next convergence speed. To reach $F(x_N) - F(x^{\ast}) \leq \e$, we need $N_f+N_g$ iterations, where
\begin{align}
    \label{convN_f}
    &N_f = \tilde{O}\left[\left( \frac{L_{p,f} R^{p+1}}{\e} \right)^{\frac{2}{3p+1}}\right],\\
    \label{convN_g}
    &N_g = \tilde{O}\left[\left( \frac{L_{p,g} R^{p+1}}{\e} \right)^{\frac{2}{3p+1}}\right].
\end{align}

Now we prove that the our framework split computation's complexities for convex functions.
\begin{theorem}
Assume $f(x)$ and $g(x)$ are convex functions with  Lipshitz $p$-th derivative ($p \geq 1$, $p+1 \geq q$) and $L_{p,f}<L_{p,g}$. Then by using our framework with $H_{p,f}=2L_{p,f}$, method converges to $F(x_N)-F(x^{\ast})\leq \e$ with $N_f$ as \eqref{convN_f} computations of derivatives $f(x)$  and $N_g$ as \eqref{convN_g} computation of derivatives $g(x)$.
\end{theorem}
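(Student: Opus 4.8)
The plan is to reduce the general convex case to the $r$-uniformly convex case already treated, via the standard regularization trick with $r=p+1$. First, fix
\[
\sigma \;=\; \frac{(p+1)\,\e}{2R^{p+1}}
\]
(assuming, as everywhere in the paper, that $R=\|x_0-x^{\ast}\|$, or an upper bound for it, is available) and run the framework of Algorithm~\ref{alg:sliding} not on $F=f+g$ but on the perturbed objective
\[
\tilde F(x) \;=\; f(x)+\tilde g(x),\qquad \tilde g(x):=g(x)+\frac{\sigma}{p+1}\|x-x_0\|^{p+1},
\]
folding the extra power term into the composite component. By Lemma~\ref{uniformly_lemma} applied to $\frac{1}{p+1}\|\cdot\|^{p+1}$ (scaled by $\sigma$), $\tilde F$ is $(p+1)$-uniformly convex with constant $\tilde\sigma_{p+1}=2^{1-p}\sigma=\Theta(\e/R^{p+1})$; and since $\frac{1}{p+1}\|\cdot\|^{p+1}$ has Lipschitz $p$-th derivative, $\tilde g$ has $L_{p,\tilde g}=L_{p,g}+O(\sigma)$-Lipschitz $p$-th derivative, the $O(\cdot)$ hiding a $p$-dependent constant. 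One may also assume $\e\lesssim L_{p,g}R^{p+1}$: otherwise $O(1)$ iterations of Algorithm~\ref{alg:highorder} on $f+g$ already reach accuracy $\e$ by Theorem~\ref{theoremCATD}, which costs at most $N_f$ calls to the oracle of $f$ and $N_g$ to that of $g$. Under this assumption $O(\sigma)=O(\e/R^{p+1})\lesssim L_{p,g}$, hence $L_{p,\tilde g}=O(L_{p,g})$ and in particular $L_{p,f}<L_{p,\tilde g}$, so $f$ remains the cheap component and the framework applies.

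Next, repeat essentially verbatim the proof of the preceding combination theorem for $r$-uniformly convex objectives, now applied to $\tilde F=f+\tilde g$ with $r=p+1$, $\sigma_r=\tilde\sigma_{p+1}$ and $H_{p,f}=2L_{p,f}$. By Theorem~\ref{CATDrestarts} the outer composite loop (Algorithm~\ref{alg:restarts} on $\tilde F$ with $\tilde g$ composite) needs
\[
N_{out}=\tilde O\!\left[\left(\frac{L_{p,f}\,R^{(p+1)-(p+1)}}{\tilde\sigma_{p+1}}\right)^{\frac{2}{3p+1}}\right]=\tilde O\!\left[\left(\frac{L_{p,f}R^{p+1}}{\e}\right)^{\frac{2}{3p+1}}\right]
\]
restarts, and the $p$-th order oracle of $f$ is invoked only on these outer steps ($O(1)$ times each), so the number of $f$-derivative computations is $N_{out}=N_f$ as in \eqref{convN_f}. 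Each inner subproblem $\Omega_{p}(f,\tilde x_k;y)+\frac{H_{p,f}}{p!}\|y-\tilde x_k\|^{p+1}+\tilde g(y)$ has $p$-th derivative Lipschitz with constant $H_{p,f}+L_{p,\tilde g}=O(L_{p,f}+L_{p,g})$ and, splitting $H_{p,f}=H_1+H_2$ with $H_1=H_2=L_{p,f}$, is $(p+1)$-uniformly convex with constant $\Theta(L_{p,f})$ coming from the $\frac{H_2}{p!}\|\cdot-\tilde x_k\|^{p+1}$ term, so its condition number is $O(L_{p,g}/L_{p,f})$, independent of $\e$; Theorem~\ref{CATDrestarts} then solves it in $N_{inn}=\tilde O[(L_{p,g}/L_{p,f})^{2/(3p+1)}]$ restarts, invoking only the oracle of $g$. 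Multiplying, the number of $g$-derivative computations is $N_g=N_{out}N_{inn}=\tilde O[(L_{p,g}R^{p+1}/\e)^{2/(3p+1)}]$, which is \eqref{convN_g}.

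It remains to translate the accuracy back to $F$. Running the above until $\tilde F(x_N)-\tilde F(\tilde x^{\ast})\le \frac{\e}{2}$, where $\tilde x^{\ast}\in\argmin\tilde F$ and $x^{\ast}\in\argmin F$, nonnegativity of the regularizer and $\|x^{\ast}-x_0\|\le R$ give
\[
F(x_N)\le\tilde F(x_N)\le\tilde F(\tilde x^{\ast})+\frac{\e}{2}\le\tilde F(x^{\ast})+\frac{\e}{2}=F(x^{\ast})+\frac{\sigma}{p+1}\|x^{\ast}-x_0\|^{p+1}+\frac{\e}{2}\le F(x^{\ast})+\e,
\]
using $\frac{\sigma}{p+1}R^{p+1}=\frac{\e}{2}$; this is exactly $F(x_N)-F(x^{\ast})\le\e$ with the asserted split of oracle calls. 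I expect the main delicacies to be the bookkeeping of the $p$-dependent constants through Theorem~\ref{CATDrestarts}, the fact that the inner subproblems are solved only inexactly — but their condition number is $\e$-independent, so a fixed relative accuracy reached in $\tilde O(1)$ restarts suffices, as made precise in Appendix~\ref{sec:inexact_sub}, and this inflates only the logarithmic factors — and the harmless a~priori reduction $\e\lesssim L_{p,g}R^{p+1}$ used to absorb $O(\sigma)$ into $L_{p,g}$.
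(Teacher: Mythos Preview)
Your argument is correct, but it takes a different route from the paper. The paper does \emph{not} regularize $F$; instead it runs Algorithm~\ref{alg:highorder} (CATD, without restarts) directly on the convex problem $F=f+g$ with $g$ as the composite part. Theorem~\ref{theoremCATD} then gives immediately
\[
N_{out}=\tilde O\!\left[\left(\frac{2L_{p,f}R^{p+1}}{\e}\right)^{\frac{2}{3p+1}}\right]=N_f,
\]
with no regularization and no accuracy translation needed. The inner subproblem is $(p+1)$-uniformly convex already, thanks to the term $\frac{H_{p,f}}{p!}\|y-\tilde x_k\|^{p+1}$ built into the tensor step, so the inner count~\eqref{N_INN} carries over verbatim and $N_g=N_{out}\cdot N_{inn}$ follows. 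Your reduction to the uniformly convex theorem via the regularizer $\frac{\sigma}{p+1}\|x-x_0\|^{p+1}$ is a legitimate alternative (and is indeed alluded to in the paper's introduction and in Appendix~\ref{sec:inexact_sub}); it has the virtue of being a black-box reduction to the preceding theorem, at the cost of the extra bookkeeping you note (bounding $L_{p,\tilde g}$, handling the a~priori case $\e\gtrsim L_{p,g}R^{p+1}$, and passing from $\tilde F$-accuracy back to $F$-accuracy). The paper's direct argument is shorter because Algorithm~\ref{alg:highorder} already delivers the desired outer rate for merely convex $F$.
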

\begin{proof}
For the outer method \ref{alg:highorder} with constant $H_{p,f}=2L_{p,f}$, we make 
\begin{equation*}
    N_{out} = \tilde{O}\left[\left( \frac{2L_{p,f} R^{p+1}}{\e} \right)^{\frac{2}{3p+1}}\right]
\end{equation*}
outer steps, it means that we need to compute $N_{out}=N_f$ derivatives of $f(x)$.
For inner method \ref{alg:highorder} to solve subproblem \eqref{prox_step} similarly we has the same rate as \eqref{N_INN} Hence the total number of inner iterations and total number of derivative's computations of $g(x)$ is 
\begin{equation*}
\begin{aligned}
    N_g&=N_{out}\cdot N_{inn}= \tilde{O}\left[\left( \frac{2L_{p,f} R^{p+1}}{\e} \right)^{\frac{2}{3p+1}}\right]\cdot \tilde{O}\left[\left( \frac{L_{p,g}}{L_{p,f}} \right)^{\frac{2}{3p+1}}\right]\\
    &= \tilde{O}\left[\left( \frac{L_{p,g} R^{p+1}}{\e} \right)^{\frac{2}{3p+1}}\right].
    \end{aligned}
\end{equation*} 
So for convex function computation complexities are also splitting.
\end{proof}

\section{Multi-Composite Tensor Method}

The natural generalization of framework \ref{alg:sliding} is to use for the sum of two functions with different smoothness and hence different order of methods. But as we know, in the literature there is no method that works with the sum of two functions with different smoothness. We need to use tensor methods for the smallest order. To improve this situation we we introduce the new type of problem, where $f(x)$ and $g(x)$ have different smoothness order. Similar idea for the first and second order was in the paper \cite{doikov2018randomized}. Next we propose a tensor method to solve such problem with splitting the complexities.

We introduce a multi-composite tensor optimization problem.
\begin{equation}
\label{eq_problem}
F(x)= f(x)+g(x)+h(x),
\end{equation}
where $h(x)$ is a simple proper closed convex function,  $f(x)$ is a convex functions with Lipschitz $q$-th derivative and $g(x)$ is a convex functions with Lipschitz $p$-th derivative.
By using Theorem 1 from \cite{nesterov2019implementable}
we can get for $f(x)$ if $H_{q,f}\geq qL_{q,f}$, that
\begin{equation*}
     \Omega_{q}(f,x;y)+ \frac{H_{q,f}}{(q+1)!} \|y-x\|^{q+1}
\end{equation*}
   is convex and 
   \begin{equation}
       \label{eq_fomega}
       f(y)\leq \Omega_{q}(f,x;y)+ \frac{H_{q,f}}{(q+1)!} \|y-x\|^{q+1}
   \end{equation}

Now we propose our method

\begin{align}
    &T_{H_{q,f},H_{p,g}}(x)\in \Argmin\limits_{y} \left\lbrace  \Omega_{q}(f,x;y)+ \frac{H_{q,f}}{(q+1)!} \|y-x\|^{q+1}\right. \\
    &+\left. \Omega_{p}(g,x;y)+ \frac{H_{p,g}}{(p+1)!} \|y-x\|^{p+1} + h(y)\right\rbrace
    \end{align}

Then 
\begin{equation}
\label{method1}
    x_{t+1}=T_{H_{q,f}, H_{p,g}}(x_t)
\end{equation}

One can see that our method based on method \cite{nesterov2019implementable} and combine models of two functions. Next we start to prove, that our method converges and split the complexities.

We assume that exists at least one solution $x_{\ast}$ of problem \eqref{eq_pr} and the level sets of $F$ are bounded. By the first-order optimality condition for $T=T_{H_{q,f}, H_{p,g}}(x)$ we get:
\begin{equation}
    \label{eq_optimality}
\begin{aligned}
&\nabla \Omega_{q}(f,x;T)+ \frac{H_{q,f}(T-x)}{q!} \|T-x\|^{q-1}\\+ &\nabla \Omega_{p}(g,x;T) + \frac{H_{p,g}(T-x)}{p!} \|T-x\|^{p-1}+\partial h(T)=0
\end{aligned}
\end{equation}
For the proof we need next small lemma.
\begin{lemma}
For any $x \in E$, $H_{q,f}\geq q L_{q,f}$ and $H_{p,g}\geq p L_{p,g}$, we have
\begin{equation}
\label{eq_fTmin}
    F(T_{H_{q,f}, H_{p,g}}(x))\leq \min\limits_{y} \lb F(y) + \frac{H_{q,f}+L_{q,f}}{(q+1)!}\|y-x\|^{q+1}+ \frac{ H_{p,g}+L_{p,g}}{(p+1)!}\|y-x\|^{p+1} \rb
\end{equation}
\end{lemma}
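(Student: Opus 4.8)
The plan is to mimic the standard argument for the basic tensor step (Theorem 1 of Nesterov's implementable tensor methods) but carried out simultaneously for the two Taylor models. First I would fix $x$ and write $T = T_{H_{q,f},H_{p,g}}(x)$, and also fix an arbitrary competitor $y \in E$. The key bounds I would invoke are the two upper model inequalities: the $q$-th order one, namely $f(y) \le \Omega_q(f,x;y) + \frac{H_{q,f}}{(q+1)!}\|y-x\|^{q+1}$ from \eqref{eq_fomega}, together with its analogue for $g$, $g(y) \le \Omega_p(g,x;y) + \frac{H_{p,g}}{(p+1)!}\|y-x\|^{p+1}$, both valid since $H_{q,f} \ge qL_{q,f}$ and $H_{p,g} \ge pL_{p,g}$. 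Since $h$ enters linearly (it is just added), summing these gives $F(y) \le M(y)$, where $M(y)$ denotes the full regularized model minimized in the definition of $T$.

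Next I would use that $T$ is the minimizer of the convex function $M(\cdot)$. I would get two things from this. On the one hand, $M(T) \le M(y)$ for every $y$, so $M(T) \le \min_y M(y)$. On the other hand, to pass from $M(T)$ back to $F(T)$ I need a \emph{lower} Taylor estimate: from \eqref{eq_sumup} applied to $f$ at order $q$ and to $g$ at order $p$, one has $\Omega_q(f,x;T) \le f(T) + \frac{L_{q,f}}{(q+1)!}\|T-x\|^{q+1}$ and $\Omega_p(g,x;T) \le g(T) + \frac{L_{p,g}}{(p+1)!}\|T-x\|^{p+1}$. Adding the regularizers $\frac{H_{q,f}}{(q+1)!}\|T-x\|^{q+1}$ and $\frac{H_{p,g}}{(p+1)!}\|T-x\|^{p+1}$ and $h(T)$ to both sides, this yields
\begin{equation*}
F(T) \le M(T) + \frac{L_{q,f}}{(q+1)!}\|T-x\|^{q+1} + \frac{L_{p,g}}{(p+1)!}\|T-x\|^{p+1},
\end{equation*}
wait — that goes the wrong way, so instead I would bound $F(T)$ directly using the upper model inequality at the point $T$ itself: $F(T) \le \Omega_q(f,x;T) + \frac{H_{q,f}}{(q+1)!}\|T-x\|^{q+1} + \Omega_p(g,x;T) + \frac{H_{p,g}}{(p+1)!}\|T-x\|^{p+1} + h(T) = M(T) \le M(y)$. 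But $M(y)$ has coefficients $H_{q,f}$ and $H_{p,g}$, not $H_{q,f}+L_{q,f}$ and $H_{p,g}+L_{p,g}$; the stronger claimed bound with the larger coefficients follows a fortiori since $L_{q,f}, L_{p,g} \ge 0$. Hence $F(T) \le \min_y\{F(y) + \frac{H_{q,f}+L_{q,f}}{(q+1)!}\|y-x\|^{q+1} + \frac{H_{p,g}+L_{p,g}}{(p+1)!}\|y-x\|^{p+1}\}$, which is exactly \eqref{eq_fTmin}.

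On reflection, the version of the argument that actually produces the $+L$ terms (rather than merely being weaker than needed) is the cleaner route: start from $F(T) \le M(T) \le M(y) = \Omega_q(f,x;y) + \cdots + h(y)$ for all $y$, then replace each Taylor polynomial $\Omega_q(f,x;y)$ by $f(y) + \frac{L_{q,f}}{(q+1)!}\|y-x\|^{q+1}$ using \eqref{eq_sumup}, and similarly for $g$; collecting terms gives precisely the right-hand side of \eqref{eq_fTmin}. I expect the only subtlety — the ``main obstacle'' — to be bookkeeping the two different regularization exponents $q+1$ and $p+1$ and making sure the convexity of the model (needed so that the first-order condition \eqref{eq_optimality} genuinely characterizes a global minimizer, and so that $T$ is well defined) is guaranteed by the stated hypotheses $H_{q,f} \ge qL_{q,f}$ and $H_{p,g} \ge pL_{p,g}$ via Nesterov's convexity-of-regularized-Taylor result; everything else is term-by-term application of \eqref{eq_fomega} and \eqref{eq_sumup}.
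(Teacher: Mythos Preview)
Your proposal is correct and, once you settle on the ``cleaner route'' in the last paragraph, it is exactly the paper's argument: bound $F(T)$ by the model value $M(T)$ via \eqref{eq_fomega} and its analogue for $g$, use that $T$ minimizes $M$, and then upper-bound $\Omega_q(f,x;y)$ and $\Omega_p(g,x;y)$ by $f(y)$ and $g(y)$ plus the respective $L/(k+1)!$ error terms from \eqref{eq_sumup}. The intermediate detour (the a fortiori step with the $H$-only coefficients) is unnecessary, as you yourself observe; the paper goes straight to the second version without it.
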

\begin{proof}
\begin{align*}
F(T_{H_{q,f}, H_{p,g}}(x))&\leq \min\limits_{y} \left\lbrace\Omega_{q}(f,x;y)+ \frac{H_{q,f}}{(q+1)!} \|y-x\|^{q+1}\right. \\
 &+\Omega_{p}(g,x;y)+ \frac{H_{p,g}}{(p+1)!} \|y-x\|^{p+1} +h(y) \left. \right\rbrace  \\
&\overset{\eqref{eq_sumup}}{\leq} \min\limits_{y} \lb F(y) + \frac{H_{q,f}+L_{q,f}}{(q+1)!}\|y-x\|^{q+1}+ \frac{ H_{p,g}+L_{p,g}}{(p+1)!}\|y-x\|^{p+1}\rb 
\end{align*}

\end{proof}

This leads us to the main theorem, that proves the convergence speed of our method.

\begin{theorem}
If $f_q(x)$ is convex functions with Lipshitz constant $L_{q,f}$ for $q$-th derivative, $f_p(x)$ is convex functions with Lipshitz constant $L_{p,g}$ for $p$-th derivative; $H_{q,f}\geq q L_{q,f}$ and $H_{p,g}\geq p L_{p,g}$. $\al_t$ is chosen such that $\al_0=1$ and $\al_t\in \lp 0 ;1 \rp \, t\geq 1$, then for any $t\geq 0$ for method \eqref{method1} we have
\begin{equation}
    F(x_{t+1})-F(x_{\ast})\leq A_t\sum\limits_{i=0}^{t} 
    \lp C_f\frac{\al_i^{q+1}}{A_i}\|x_i-x_{\ast}\|^{q+1}+C_g   \frac{\al_i^{p+1}}{A_i}\|x_i-x_{\ast}\|^{p+1}\rp
\end{equation}
where 
\begin{equation*}
    C_f=\frac{H_{q,f}+L_{q,f}}{(q+1)!}, \quad C_g=\frac{H_{p,g}+L_{p,g}}{(p+1)!};
\end{equation*}
\begin{equation}
    \label{eq_A}
    A_t=\begin{cases}
    1, \quad t=0  \\
    \prod\limits_{i=1}^{t} (1-\al_i), \quad t\geq 1
    \end{cases}
\end{equation}

\end{theorem}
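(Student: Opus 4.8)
The plan is to run the standard estimating-sequences / potential-function argument for the basic tensor step, adapted to the situation where two Taylor models of different orders are combined. Define, as is typical, the scaled iterate $\alpha_t$ and the accumulated weight $A_t$ via \eqref{eq_A}, and introduce $z_t$ so that $x_{t+1}=T_{H_{q,f},H_{p,g}}(z_t)$ with $z_t$ the convex combination $z_t=(1-\alpha_t)x_t+\alpha_t x_\ast$ (this uses $\alpha_0=1$, so $z_0=x_\ast$). The key inequality is the one-step progress bound coming from Lemma (the one giving \eqref{eq_fTmin}): applied at $x=z_t$ and with the test point $y=z_t$ on the right-hand side — actually with $y$ any convex combination of $x_t$ and $x_\ast$ — one gets
\[
F(x_{t+1})\leq F(z_t)+\frac{H_{q,f}+L_{q,f}}{(q+1)!}\alpha_t^{q+1}\|x_t-x_\ast\|^{q+1}+\frac{H_{p,g}+L_{p,g}}{(p+1)!}\alpha_t^{p+1}\|x_t-x_\ast\|^{p+1},
\]
after substituting $y-x=\alpha_t(x_\ast-x_t)$ and $\|y-z_t\|=\alpha_t\|x_\ast-x_t\|$. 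Then convexity of $F$ gives $F(z_t)\leq(1-\alpha_t)F(x_t)+\alpha_t F(x_\ast)$.

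Next I would combine these two facts into the recursion $F(x_{t+1})-F(x_\ast)\leq(1-\alpha_t)\,(F(x_t)-F(x_\ast))+C_f\alpha_t^{q+1}\|x_t-x_\ast\|^{q+1}+C_g\alpha_t^{p+1}\|x_t-x_\ast\|^{p+1}$, with $C_f,C_g$ as in the statement. Dividing by $A_t$ and telescoping the product $\prod(1-\alpha_i)$ — i.e. noting $A_t=(1-\alpha_t)A_{t-1}$ and hence $\frac{1}{A_t}(F(x_{t+1})-F(x_\ast))\leq\frac{1}{A_{t-1}}(F(x_t)-F(x_\ast))+\frac{C_f\alpha_t^{q+1}}{A_t}\|x_t-x_\ast\|^{q+1}+\frac{C_g\alpha_t^{p+1}}{A_t}\|x_t-x_\ast\|^{p+1}$ — and summing from $i=0$ to $t$ (the $i=0$ term behaves correctly because $A_0=1$ and $\alpha_0=1$ kills the $F(x_0)-F(x_\ast)$ contribution on the left when we reorganize) yields exactly
\[
F(x_{t+1})-F(x_\ast)\leq A_t\sum_{i=0}^{t}\left(C_f\frac{\alpha_i^{q+1}}{A_i}\|x_i-x_\ast\|^{q+1}+C_g\frac{\alpha_i^{p+1}}{A_i}\|x_i-x_\ast\|^{p+1}\right),
\]
which is the claim.

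The main obstacle I anticipate is the very first step: justifying that the one-step bound \eqref{eq_fTmin}, which is stated with minimization over $y$ and with the specific center $x$, can be instantiated at $x=z_t$ with the test point being the convex combination of $x_t$ and $x_\ast$, and that the boundedness of the level sets is genuinely what is needed to keep $x_t$ in a region where $\|x_t-x_\ast\|$ is controlled (the theorem as stated keeps $\|x_i-x_\ast\|$ inside the sum, so no further bounding is required, which actually makes the argument cleaner — one does not yet need to choose $\alpha_t$ or invoke uniform convexity). A secondary point to check carefully is the indexing in the telescoping when $t=0$: there the sum is a single term $C_f\|x_0-x_\ast\|^{q+1}+C_g\|x_0-x_\ast\|^{p+1}$ and the left side is $F(x_1)-F(x_\ast)$, which must match $F(T_{H_{q,f},H_{p,g}}(x_0))-F(x_\ast)$ bounded directly by Lemma with $x=x_0$, $y=x_\ast$ — this is the base case and it should go through since $\alpha_0=1$ forces $z_0=x_\ast$. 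Everything else is routine algebra.
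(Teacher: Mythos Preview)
Your overall approach is the same as the paper's: use Lemma~\eqref{eq_fTmin} to get a one-step bound, plug in the convex combination $y=x_t+\alpha_t(x_\ast-x_t)$, apply convexity of $F$, then divide by $A_t$ and telescope. The displayed recursion and the telescoping are exactly what the paper does.

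However, there is a genuine confusion in your setup that you should fix. You write ``introduce $z_t$ so that $x_{t+1}=T_{H_{q,f},H_{p,g}}(z_t)$'' and ``applied at $x=z_t$'', but the method \eqref{method1} is $x_{t+1}=T_{H_{q,f},H_{p,g}}(x_t)$: the tensor step is centered at the current iterate $x_t$, not at any auxiliary convex combination. Lemma~\eqref{eq_fTmin} must therefore be invoked with center $x=x_t$; the point $z_t=(1-\alpha_t)x_t+\alpha_t x_\ast$ enters only as the \emph{test point} $y$ on the right-hand side (and indeed $\|y-x_t\|=\alpha_t\|x_t-x_\ast\|$, not $\|y-z_t\|$, which would be zero). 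Once you correct this, your inequalities are exactly the paper's, and the base case $t=0$ works precisely because $\alpha_0=1$ kills the $(1-\alpha_0)(F(x_0)-F(x_\ast))$ term. No estimating sequences or auxiliary iterate $z_t$ is actually needed.
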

\begin{proof}
From \eqref{eq_fTmin} 
\begin{align*}
F(x_{t+1})&\leq \min\limits_{y} \lb F(y) + \frac{ H_{q,f}+L_{q,f}}{(q+1)!}\|y-x_t\|^{q+1}+ \frac{ H_{p,g}+L_{p,g}}{(p+1)!}\|y-x_t\|^{p+1}\rb\\
&\leq  F(y) + C_f\|y-x_t\|^{q+1}+ C_g\|y-x_t\|^{p+1}
\end{align*}
If we take $y=x_t+\al_t(x_{\ast}-x_t)$, then by convexity
\begin{align*}
&F(x_{t+1})\leq   F(y) + C_f\al_t^{q+1}\|x_{\ast}-x_t\|^{q+1}+ C_g\al_t^{p+1}\|x_{\ast}-x_t\|^{p+1} \\
&\leq  (1-\al_t)F(x_t) + \al_t F(x_{\ast})  + C_f\al_t^{q+1}\|x_{\ast}-x_t\|^{q+1}+ C_g\al_t^{p+1}\|x_{\ast}-x_t\|^{p+1}.
\end{align*}
Hence
\begin{align*}
F(x_{t+1})-F(x_{\ast}) &\leq  (1-\al_t)\ls F(x_t) - F(x_{\ast})\rs\\  &+ C_f\al_t^{q+1}\|x_{\ast}-x_t\|^{q+1}+ C_g\al_t^{p+1}\|x_{\ast}-x_t\|^{p+1} 
\end{align*}
For $t=0$ and $\al_0=1$ we get 
\begin{align*}
F(x_{1})-F(x_{\ast}) &\leq C_f\|x_{\ast}-x_0\|^{q+1}+ C_g\|x_{\ast}-x_0\|^{p+1}
\end{align*}
For $t>0$ we divide both sides by $A_t$:
\begin{align*}
\frac{1}{A_t}(F(x_{t+1})-F(x_{\ast})) &\leq  \frac{(1-\al_t)}{A_t}\ls F(x_t) - F(x_{\ast})\rs\\  &+ C_f\frac{\al_t^{q+1}}{A_t}\|x_{\ast}-x_t\|^{q+1}+ C_g\frac{\al_t^{p+1}}{A_t}\|x_{\ast}-x_t\|^{p+1}\\ 
&\overset{\eqref{eq_A}}{\leq}  \frac{1}{A_{t-1}}\ls F(x_t) - F(x_{\ast})\rs\\  &+ C_f\frac{\al_t^{q+1}}{A_t}\|x_{\ast}-x_t\|^{q+1}+ C_g\frac{\al_t^{p+1}}{A_t}\|x_{\ast}-x_t\|^{p+1}
\end{align*}
By summarising both sides we obtain \eqref{eq_fTmin}
\end{proof}
Next we can fix parameters of this theorem and get next corollary.
\begin{corollary}
For method \eqref{method1} and $\al_t=\frac{p+1}{t+p+1}$ we have
\begin{equation}
\label{eq_speed1}
    F(x_{t+1})-F(x_{\ast})\leq 
     E_q\frac{(H_{q,f}+L_{q,f})R^{q+1}}{(t+p+1)^q}+E_p\frac{(H_{p,g}+L_{p,g})R^{p+1}}{(t+p+1)^p}
\end{equation}
where 
\begin{equation*}
    E_k=\frac{(p+1)^{k+1}}{(k+1)!}, \quad k= \lb q, p \rb
\end{equation*}
\end{corollary}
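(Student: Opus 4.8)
The plan is to start from the general recursion established in the preceding theorem, namely
\[
F(x_{t+1})-F(x_{\ast})\leq A_t\sum_{i=0}^{t}\lp C_f\frac{\al_i^{q+1}}{A_i}\|x_i-x_{\ast}\|^{q+1}+C_g\frac{\al_i^{p+1}}{A_i}\|x_i-x_{\ast}\|^{p+1}\rp,
\]
and to specialize it to the choice $\al_t=\frac{p+1}{t+p+1}$. First I would compute $A_t$ in closed form for this stepsize: since $1-\al_i=\frac{i}{i+p+1}$, the product telescopes and $A_t=\prod_{i=1}^{t}\frac{i}{i+p+1}=\frac{t!\,(p+1)!}{(t+p+1)!}$, which behaves like $\Theta\!\big((p+1)!/(t+p+1)^{p+1}\big)$. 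This is the standard computation underlying the $O(1/t^{p})$ rate of the basic (unaccelerated) tensor method, so I expect it to go through as in \cite{nesterov2019implementable,doikov2019local}.

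Next I would bound the two sums. Using boundedness of the level sets, $\|x_i-x_{\ast}\|\leq R$ for all $i$ (this is where the hypothesis that the level sets of $F$ are bounded, together with monotonicity $F(x_{i+1})\le F(x_i)$ which follows from \eqref{eq_fTmin} with $y=x$, enters), I can pull $\|x_i-x_{\ast}\|^{q+1}\le R^{q+1}$ and $\|x_i-x_{\ast}\|^{p+1}\le R^{p+1}$ out of the sums. It then remains to estimate $A_t\sum_{i=0}^{t}\frac{\al_i^{q+1}}{A_i}$ and $A_t\sum_{i=0}^{t}\frac{\al_i^{p+1}}{A_i}$. For the second one, $\frac{\al_i^{p+1}}{A_i}=\frac{(p+1)^{p+1}}{(i+p+1)^{p+1}}\cdot\frac{(i+p+1)!}{i!\,(p+1)!}=\frac{(p+1)^{p+1}}{(p+1)!}\cdot\frac{(i+p+1)!}{i!\,(i+p+1)^{p+1}}$; the ratio $\frac{(i+p+1)!}{i!\,(i+p+1)^{p+1}}=\prod_{j=1}^{p+1}\frac{i+j}{i+p+1}\le 1$, so each term is at most $\frac{(p+1)^{p+1}}{(p+1)!}$ and, more carefully, summing and multiplying by $A_t\sim (p+1)!/(t+p+1)^{p+1}$ yields a bound of order $\frac{(p+1)^{p+1}}{(p+1)!}\cdot\frac{(t+1)(p+1)!}{(t+p+1)^{p+1}}$, i.e. $\Theta\!\big(\frac{(p+1)^{p+1}}{(p+1)!}\cdot\frac{1}{(t+p+1)^{p}}\big)=E_p/(t+p+1)^p$ with $E_p=\frac{(p+1)^{p+1}}{(p+1)!}$. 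The same computation with $p$ replaced by $q$ in the exponent of $\al_i$ (but keeping $A_i$ as is) gives the $E_q/(t+p+1)^q$ term with $E_q=\frac{(p+1)^{q+1}}{(q+1)!}$. Putting the two pieces back together with the constants $C_f=\frac{H_{q,f}+L_{q,f}}{(q+1)!}$ and $C_g=\frac{H_{p,g}+L_{p,g}}{(p+1)!}$ absorbed appropriately produces exactly \eqref{eq_speed1}.

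The main obstacle is the bookkeeping in the second step: the mismatch between the exponent $q+1$ (or $p+1$) in $\al_i^{q+1}$ and the exponent $p+1$ hidden in $A_i$ means the two sums are \emph{not} symmetric, and one has to be careful that the $q$-term really decays like $(t+p+1)^{-q}$ and not some other power. The clean way to see this is to write $\frac{\al_i^{q+1}}{A_i}=\al_i^{q-p}\cdot\frac{\al_i^{p+1}}{A_i}$ and use $\al_i\le 1$ (valid since $q\ge p$ is implicit, or more generally bound $\al_i^{q-p}\le(p+1)^{q-p}$ when $q\le p$) to reduce to the already-handled $p+1$ case, at the cost of one extra factor that gets swallowed into $E_q$. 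I would also double-check the edge case $i=0$, where $\al_0=1$ and $A_0=1$ contribute the leading $C_f R^{q+1}+C_g R^{p+1}$ term consistent with the $t=0$ bound from the theorem. Once these are pinned down the corollary follows by a direct substitution, so I do not anticipate any deeper difficulty.
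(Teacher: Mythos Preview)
Your proposal is correct and follows essentially the same route as the paper's own proof: compute $A_t=\frac{t!\,(p+1)!}{(t+p+1)!}$ from the telescoping product, replace each $\|x_i-x_{\ast}\|$ by $R$, and then bound the two sums $A_t\sum_i \al_i^{p+1}/A_i$ and $A_t\sum_i \al_i^{q+1}/A_i$ directly via the identity $\frac{1}{A_i}=\frac{1}{(p+1)!}\prod_{j=1}^{p+1}(i+j)$. The paper does the direct computation for both sums (your first plan), not the reduction $\al_i^{q+1}/A_i=\al_i^{q-p}\cdot\al_i^{p+1}/A_i$ you mention as an alternative; your extra remark on monotonicity to justify $\|x_i-x_{\ast}\|\le R$ is actually more careful than the paper, which simply cites the definition of $R$.
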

\begin{proof}
We use 
\begin{align*}
    F(x_{t+1})-F(x_{\ast})&\leq A_t\sum\limits_{i=0}^{t} 
    \lp C_f\frac{\al_i^{q+1}}{A_i}\|x_i-x_{\ast}\|^{q+1}+C_g   \frac{\al_i^{p+1}}{A_i}\|x_i-x_{\ast}\|^{p+1}\rp\\
    &\overset{\eqref{def_diameter}} {\leq}C_fR^{q+1}\sum\limits_{i=0}^{t} 
    \frac{A_t\al_i^{q+1}}{A_i}+C_g R^{p+1} \sum\limits_{i=0}^{t}  \frac{A_t\al_i^{p+1}}{A_i}\\
\end{align*}
Now we compute these sums for $\al_t=\frac{p+1}{t+p+1}$: 
\begin{align*}
    A_t&=\prod\limits_{i=1}^t (1-\al_i)
    =\prod\limits_{i=1}^t\frac{i}{i+p+1}
    =\frac{t!\,(p+1)!}{(t+p+1)!}
    =(p+1)!\prod\limits_{i=1}^{p+1}\frac{1}{t+i}\\
    &\geq \frac{(p+1)!}{(t+1)^{p+1}}
\end{align*}
For the second sum we get
\begin{align*}
    \sum \limits_{i=1}^t \frac{A_t\al_i^{p+1}}{A_i}&=
    \sum \limits_{i=1}^t \frac{(p+1)^{p+1}\prod_{j=1}^{p+1}(i+j)}{(i+p+1)^{p+1}(p+1)!}\cdot(p+1)!\prod\limits_{i=1}^{p+1}\frac{1}{t+i}\\
     &=(p+1)^{p+1}\sum \limits_{i=1}^t \prod_{j=1}^{p+1}\frac{i+j}{i+p+1}\prod\limits_{i=1}^{p+1}\frac{1}{t+i}\\
    &\leq \frac{(p+1)^{p+1}}{(t+p+1)^{p}}
\end{align*}
For the first sum we get
For second sum we have
\begin{align*}
    \sum \limits_{i=1}^t \frac{A_t\al_i^{q+1}}{A_i}&=
    \sum \limits_{i=1}^t \frac{(p+1)^{q+1}\prod_{j=1}^{p+1}(i+j)}{(i+p+1)^{q+1}(p+1)!}\cdot(p+1)!\prod\limits_{i=1}^{p+1}\frac{1}{t+i}\\
    &=(p+1)^{q+1}\sum\limits_{i=1}^t \frac{\prod_{j=1}^{p+1}(i+j)}{(i+p+1)^{q+1}}\cdot\prod\limits_{i=1}^{p+1}\frac{1}{t+i}\\
    &\leq \frac{(p+1)^{q+1}}{(t+p+1)^{q}}.
\end{align*}
From this two formulas for sums we get \eqref{eq_speed1}
\end{proof}

Finally, we prove that our method converges with the desired speed and split the complexities. Note that this algorithm can be generalized for the sum of $m$ functions. 

\section{Conclusion}
In this paper, we consider the minimization of the sum of two functions $f+g$, each having Lipshitz $p$-th order derivatives with different Lipschitz constants. We propose a general framework to accelerate tensor methods by splitting computational complexities. As a result, we get near-optimal oracle complexity for each function in the sum separately for any $p\geq 1$, including the first-order methods. To be more precise, if the near optimal complexity to minimize $f$ is $N_f(\varepsilon)$ iterations and to minimize $g$ is $N_g(\varepsilon)$, then our method requires no more than $N_f(\varepsilon)$ oracle calls for $f$ and $\tilde{O}(N_f(\varepsilon))$ oracle calls for $g$ to minimze $f+g$.
We prove, that our framework works with both convex and uniformly convex functions. To get this result, we additionally generalize near-optimal tensor methods for composite problems with inexact inner tensor step. 

Further, we investigate the situation when the functions in the sum have Lipschitz derivatives of a different order. For this situation, we propose a generic way to separate the oracle complexity between the parts of the sum. It is the first tensor method that works with functions with different smoothness. Our method is not optimal, which leads to an open problem of the optimal combination of oracles of a different order.

\subsection{Acknowledgements}
We would like to thank Yu. Nesterov for fruitful discussions on inexact solution of tensor subproblem.

\subsection{Funding}
The work of D. Kamzolov was funded by RFBR, project number 19-31-27001. The work of A.V. Gasnikov and P.E. Dvurechensky in the first part of the paper was supported by RFBR grant 18-29-03071 mk. The work of A.V. Gasnikov in Appendix was prepared within the framework of the HSE University Basic Research Program and funded by the Russian Academic Excellence Project '5-100’.

\bibliographystyle{plain} 
\bibliography{biblio}

\appendix
\section{Proof of Composite Accelerated Taylor Descent} \label{sec:apCATD}

This section is a rewriting of proof from \cite{bubeck2018near}, with adding composite part into the proof. Next theorem based on Theorem 2.1 from \cite{bubeck2018near}
\begin{theorem} \label{thm:MS}
Let $(y_k)_{k \geq 1}$ be a sequence of points in $\R^d$ and $(\lambda_k)_{k \geq 1}$ a sequence in $\R_+$. Define $(a_k)_{k \geq 1}$ such that $\lambda_k A_k = a_k^2$ where $A_k = \sum_{i=1}^k a_i$. Define also for any $k\geq 0$, $x_k = x_0 - \sum_{i=1}^k a_i (\nabla f(y_i)+g'(y_i))$  and $\tilde{x}_k := \frac{a_{k+1}}{A_{k+1}} x_{k} + \frac{A_k}{A_{k+1}} y_k$. Finally assume
if for some $\sigma \in [0,1]$
\begin{equation} \label{eq:igdrefined}
\|y_{k+1} - (\tilde{x}_k - \lambda_{k+1} \nabla f(y_{k+1}))\| \leq \sigma \cdot \|y_{k+1} - \tilde{x}_k\| \,,
\end{equation}
then one has for any $x \in \R^d$,
\begin{equation} \label{eq:rate}
F(y_k) - F(x) \leq \frac{2 \|x\|^2}{\left(\sum_{i=1}^k \sqrt{\lambda_i} \right)^2}  \,,
\end{equation}
and
\begin{equation} \label{eq:Alambdatradeoff}
\sum_{i=1}^k \frac{A_i}{\lambda_i} \|y_i - \tilde{x}_{i-1}\|^2 \leq \frac{\|x^*\|^2}{1-\sigma^2} \,.
\end{equation}
\end{theorem}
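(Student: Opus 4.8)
The plan is to run the Monteiro--Svaiter estimate-sequence argument of \cite{bubeck2018near}, carrying the composite subgradient through the potential. For $i\ge 1$ set $F'(y_i):=\nabla f(y_i)+g'(y_i)$; the optimality condition of the auxiliary problem \eqref{prox_step} defining $g'(y_i)$ forces $g'(y_i)\in\partial g(y_i)$, hence $F'(y_i)\in\partial F(y_i)$ and the affine minorant $\ell_i(x):=F(y_i)+\langle F'(y_i),x-y_i\rangle$ satisfies $\ell_i(x)\le F(x)$ for all $x$. Introduce the potentials
\begin{equation*}
\Phi_k(x)=\tfrac12\|x-x_0\|^2+\sum_{i=1}^k a_i\,\ell_i(x),\qquad \Phi_k^\ast=\min_x\Phi_k(x).
\end{equation*}
Each $\Phi_k$ is $1$-strongly convex, its unique minimizer is exactly the point $x_k=x_0-\sum_{i=1}^k a_iF'(y_i)$ from the statement, so $\Phi_k(x)=\Phi_k^\ast+\tfrac12\|x-x_k\|^2$; and since $\ell_i\le F$ we also have $\Phi_k^\ast\le\Phi_k(x)\le\tfrac12\|x-x_0\|^2+A_kF(x)$ for every $x$.

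Both conclusions of the theorem are then consequences of the single refined lower bound
\begin{equation*}
\Phi_k^\ast\;\ge\;A_kF(y_k)+\frac{1-\sigma^2}{2}\sum_{i=1}^k\frac{A_i}{\lambda_i}\|y_i-\tilde x_{i-1}\|^2 ,
\end{equation*}
which I would prove by induction on $k$. Granting it, combining with $\Phi_k^\ast\le\tfrac12\|x-x_0\|^2+A_kF(x)$ and discarding the nonnegative sum on the left gives $F(y_k)-F(x)\le\tfrac{1}{2A_k}\|x-x_0\|^2$ for arbitrary $x$; the elementary estimate $\sqrt{A_k}\ge\tfrac12\sum_{i=1}^k\sqrt{\lambda_i}$ — obtained by telescoping $\sqrt{A_i}-\sqrt{A_{i-1}}=\tfrac{a_i}{\sqrt{A_i}+\sqrt{A_{i-1}}}\ge\tfrac12\sqrt{\lambda_i}$, which uses $a_i^2=\lambda_iA_i$ and $A_0=0$ — then yields \eqref{eq:rate}. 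Specializing instead to $x=x^\ast$ and using $F(y_k)\ge F(x^\ast)$ leaves exactly $\tfrac{1-\sigma^2}{2}\sum_{i=1}^k\tfrac{A_i}{\lambda_i}\|y_i-\tilde x_{i-1}\|^2\le\tfrac12\|x^\ast-x_0\|^2$, which is \eqref{eq:Alambdatradeoff}.

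For the induction, $k=0$ is trivial ($A_0=0$). For the step, write $\Phi_{k+1}=\Phi_k+a_{k+1}\ell_{k+1}$, substitute $\Phi_k(x)=\Phi_k^\ast+\tfrac12\|x-x_k\|^2$, and minimize the resulting quadratic in $x$ explicitly; the minimizer is $x_{k+1}=x_k-a_{k+1}F'(y_{k+1})$ and
\begin{equation*}
\Phi_{k+1}^\ast=\Phi_k^\ast+a_{k+1}\big(F(y_{k+1})+\langle F'(y_{k+1}),x_k-y_{k+1}\rangle\big)-\tfrac12 a_{k+1}^2\|F'(y_{k+1})\|^2 .
\end{equation*}
Applying the inductive hypothesis, bounding $\Phi_k^\ast\ge A_kF(y_k)+(\text{sum})\ge A_k\ell_{k+1}(y_k)+(\text{sum})$ by convexity of $F$, and inserting the identity $A_ky_k+a_{k+1}x_k=A_{k+1}\tilde x_k$ together with $a_{k+1}^2=\lambda_{k+1}A_{k+1}$ collects the inner products into
\begin{equation*}
\Phi_{k+1}^\ast\ge A_{k+1}F(y_{k+1})+\frac{1-\sigma^2}{2}\sum_{i=1}^k\frac{A_i}{\lambda_i}\|y_i-\tilde x_{i-1}\|^2+A_{k+1}\Big(\langle F'(y_{k+1}),\tilde x_k-y_{k+1}\rangle-\tfrac{\lambda_{k+1}}{2}\|F'(y_{k+1})\|^2\Big).
\end{equation*}
The induction closes once the last parenthesis is shown to be at least $\tfrac{1-\sigma^2}{2\lambda_{k+1}}\|y_{k+1}-\tilde x_k\|^2$, and this is precisely what expanding and rearranging the large-step condition \eqref{eq:igdrefined} gives: $\|y_{k+1}-(\tilde x_k-\lambda_{k+1}F'(y_{k+1}))\|^2\le\sigma^2\|y_{k+1}-\tilde x_k\|^2$ is equivalent to $2\lambda_{k+1}\langle F'(y_{k+1}),\tilde x_k-y_{k+1}\rangle-\lambda_{k+1}^2\|F'(y_{k+1})\|^2\ge(1-\sigma^2)\|y_{k+1}-\tilde x_k\|^2$.

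The one delicate point — the only genuine departure from \cite{bubeck2018near} — is that \eqref{eq:igdrefined} must be read for the \emph{composite} direction $F'(y_{k+1})=\nabla f(y_{k+1})+g'(y_{k+1})$, in accordance with the definitions of $x_k$ and of the minorants $\ell_i$. This is harmless because the optimality condition of \eqref{prox_step} forces
\begin{equation*}
F'(y_{k+1})=\big(\nabla f(y_{k+1})-\nabla\Omega_p(f,\tilde x_k;y_{k+1})\big)+\tfrac{(p+1)H_{p,f}}{p!}\|y_{k+1}-\tilde x_k\|^{p-1}(\tilde x_k-y_{k+1}),
\end{equation*}
i.e.\ a Taylor residual (of norm $\le\tfrac{L_{p,f}}{p!}\|y_{k+1}-\tilde x_k\|^p$, the gradient analogue of \eqref{eq_sumup}) plus a positive multiple of $\tilde x_k-y_{k+1}$ whose coefficient is pinned by the step-size window on $\lambda_{k+1}$ — exactly the structure that $\nabla f(y_{k+1})$ itself has in the non-composite case, so \eqref{eq:igdrefined} holds for $F'$ with the same $\sigma<1$ and the argument above goes through verbatim. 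I expect this composite bookkeeping to be the main obstacle; the remaining pieces — the quadratic minimization, the convex-combination identity for $\tilde x_k$, and the summation lemma for $\sqrt{A_k}$ — are routine and purely scalar.
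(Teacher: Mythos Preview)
Your proof is correct and follows essentially the same estimate-sequence route as the paper: your potential $\Phi_k$ is precisely the paper's $\psi_k$ (Lemma~\ref{lem:basic1}), your single inductive inequality $\Phi_k^\ast\ge A_kF(y_k)+\tfrac{1-\sigma^2}{2}\sum_{i\le k}\tfrac{A_i}{\lambda_i}\|y_i-\tilde x_{i-1}\|^2$ is the telescoped form of Lemma~\ref{lem:basic4}, and your use of $\sqrt{A_k}\ge\tfrac12\sum_i\sqrt{\lambda_i}$ is the content of the Lemma~2.5 from \cite{bubeck2018near} that the paper invokes. You also correctly catch that \eqref{eq:igdrefined} as written (with only $\nabla f$) is a slip and must be read with the composite direction $F'(y_{k+1})=\nabla f(y_{k+1})+g'(y_{k+1})$; the paper's own proof (Lemma~\ref{lem:basic4} and Lemma~\ref{lem:controlstepsize}) silently uses the composite direction throughout, so your reading is the intended one.
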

To prove this theorem we introduce auxiliaries lemmas based on lemmas 2.2-2.5 and 3.1 , lemmas 2.6 and 3.3 one can take directly from \cite{bubeck2018near} without any changes.

\begin{lemma} \label{lem:basic1}
Let $\psi_0(x) = \frac{1}{2} \|x-x_0\|^2$ and define by induction $\psi_{k}(x) = \psi_{k-1}(x) + a_{k} \Omega_1(F, y_{k}, x)$. Then $x_k =x_0 - \sum_{i=1}^k a_i (\nabla f(y_i) + g'(y_i))$ is the minimizer of $\psi_k$, and 
$\psi_k(x) \leq A_k F(x) + \frac{1}{2} \|x-x_0\|^2$ where $A_k = \sum_{i=1}^k a_i$. 
\end{lemma}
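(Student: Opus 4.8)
The plan is to unfold the recursion defining $\psi_k$ and recognize it as a simple strongly convex quadratic. First I would write, by iterating the defining relation,
\[
\psi_k(x) = \frac{1}{2}\|x - x_0\|^2 + \sum_{i=1}^k a_i\, \Omega_1(F, y_i, x) = \frac{1}{2}\|x - x_0\|^2 + \sum_{i=1}^k a_i\left(F(y_i) + \langle \nabla f(y_i) + g'(y_i),\, x - y_i\rangle\right),
\]
using that $\Omega_1(F, y_i, \cdot)$ is the affine function $x \mapsto F(y_i) + \langle \nabla F(y_i), x - y_i\rangle$ together with $\nabla F(y_i) = \nabla f(y_i) + g'(y_i)$, where $g'(y_i)$ is exactly the quantity defined from the optimality condition of the prox step \eqref{prox_step}.

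Next I would observe that $\psi_k$ is a strongly convex quadratic in $x$: the only nonaffine term is $\frac{1}{2}\|x - x_0\|^2$, and all the summands contributed by the $\Omega_1$ terms are affine in $x$. Hence $\psi_k$ has a unique minimizer, found by setting its gradient to zero:
\[
\nabla \psi_k(x) = (x - x_0) + \sum_{i=1}^k a_i\big(\nabla f(y_i) + g'(y_i)\big) = 0 \quad\Longleftrightarrow\quad x = x_0 - \sum_{i=1}^k a_i\big(\nabla f(y_i) + g'(y_i)\big) = x_k,
\]
which is precisely the claimed identification of $x_k$ as $\argmin \psi_k$.

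For the upper bound I would invoke convexity of $F$: since $g'(y_i) \in \partial g(y_i)$, the vector $\nabla f(y_i) + g'(y_i)$ is a subgradient of $F = f + g$ at $y_i$, so the subgradient inequality gives $\Omega_1(F, y_i, x) \leq F(x)$ for every $x$. Summing these with the nonnegative weights $a_i$ yields $\sum_{i=1}^k a_i\, \Omega_1(F, y_i, x) \leq A_k F(x)$, and adding $\psi_0(x) = \frac{1}{2}\|x - x_0\|^2$ gives $\psi_k(x) \leq A_k F(x) + \frac{1}{2}\|x - x_0\|^2$. The only point that needs care is the justification that $g'(y_i)$, as defined via the first-order optimality condition for the convex subproblem \eqref{prox_step}, genuinely lies in $\partial g(y_i)$ — this is what makes $\nabla f(y_i) + g'(y_i) \in \partial F(y_i)$ and hence validates the convexity step; it follows immediately by rearranging that optimality condition. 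Beyond this, the argument is routine.
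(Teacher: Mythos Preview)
Your argument is correct and is exactly the standard one: expand the recursion, recognize a quadratic plus affine terms, set the gradient to zero to identify the minimizer, and use the subgradient inequality for $F$ to get the upper bound. The paper does not actually write out a proof for this lemma --- it is stated without proof as a direct adaptation of the corresponding lemma in \cite{bubeck2018near} --- so your proposal simply fills in what the paper leaves implicit, and your remark that $g'(y_i)\in\partial g(y_i)$ follows from the optimality condition of \eqref{prox_step} is the right justification for the composite case.
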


\begin{lemma} \label{lem:basic2}
Let $(z_k)$ be a sequence such that 
\begin{equation} \label{eq:tosatisfy}
\psi_k(x_k) - A_k F(z_k) \geq 0 \,.
\end{equation}
Then one has for any $x$,
\begin{equation} \label{eq:tosatisfy2}
F(z_k) \leq F(x) + \frac{\|x-x_0\|^2}{2 A_k} \,.
\end{equation}
\end{lemma}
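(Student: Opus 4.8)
The plan is to chain the two conclusions of Lemma~\ref{lem:basic1} with the hypothesis, in the classical ``estimating sequences'' fashion. First I would use that $x_k$ is the minimizer of $\psi_k$ (the first assertion of Lemma~\ref{lem:basic1}), so that $\psi_k(x_k) \le \psi_k(x)$ for every $x \in \R^d$. Next I would invoke the upper estimate from the same lemma, namely $\psi_k(x) \le A_k F(x) + \tfrac12\|x-x_0\|^2$, which also holds for all $x$.

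Combining these two facts with the standing assumption $\psi_k(x_k) - A_k F(z_k) \ge 0$ produces, for every $x$, the chain
\[
A_k F(z_k) \;\le\; \psi_k(x_k) \;\le\; \psi_k(x) \;\le\; A_k F(x) + \tfrac12\|x-x_0\|^2 .
\]
Since $A_k = \sum_{i=1}^k a_i > 0$ (the weights $a_i$ being positive), I would divide through by $A_k$ to obtain $F(z_k) \le F(x) + \frac{\|x-x_0\|^2}{2A_k}$, which is exactly \eqref{eq:tosatisfy2}.

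I do not expect a genuine obstacle here: this lemma is purely the bookkeeping step of the Monteiro--Svaiter/estimating-sequences scheme, and the only points requiring a modicum of care are (i) that both halves of Lemma~\ref{lem:basic1} are needed — the identification of $x_k$ as the argmin and the quadratic majorant on $\psi_k$ — and (ii) positivity of $A_k$, so that the final division is legitimate (for $k\ge 1$, which is the relevant range). The substantive work, i.e. exhibiting a sequence $(z_k)$ satisfying the premise \eqref{eq:tosatisfy} — concretely $z_k = y_k$, verified through the inexact proximal inequality \eqref{eq:igdrefined} — is what the following lemmas handle, and is deliberately not part of this statement.
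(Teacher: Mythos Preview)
Your proposal is correct and coincides with the paper's own proof: the paper writes precisely the chain $A_k F(z_k) \le \psi_k(x_k) \le \psi_k(x) \le A_k F(x) + \tfrac12\|x-x_0\|^2$, citing Lemma~\ref{lem:basic1} for the last two inequalities and then dividing by $A_k$. Your additional remarks on the positivity of $A_k$ and on where the hypothesis \eqref{eq:tosatisfy} is later verified are accurate side comments but not part of the argument itself.
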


\begin{proof}
One has (recall Lemma \ref{lem:basic1}):
\[
A_k F(z_k) \leq \psi_k(x_k) \leq \psi_k(x) \leq A_k F(x) + \frac{1}{2}\|x-x_0\|^2 \,.
\]
\end{proof}

\begin{lemma} \label{lem:basic3}
One has for any $x$,
\begin{align*}
& \psi_{k+1}(x) - A_{k+1} F(y_{k+1}) - (\psi_k(x_k) - A_k F(z_k)) \\
& \geq A_{k+1} (\nabla f(y_{k+1}) +g'(y_{k+1}))\cdot \left(\frac{a_{k+1}}{A_{k+1}} x + \frac{A_k}{A_{k+1}} z_k - y_{k+1} \right ) + \frac{1}{2} \|x -x_k\|^2 \,.
\end{align*}
\end{lemma}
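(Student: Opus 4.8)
The plan is to mimic the standard Monteiro--Svaiter potential-function argument (Lemma 2.4 in \cite{bubeck2018near}), carrying the composite subgradient term $g'(y_{k+1})$ alongside $\nabla f(y_{k+1})$ throughout. First I would expand the left-hand side using the inductive definition $\psi_{k+1}(x) = \psi_k(x) + a_{k+1}\Omega_1(F,y_{k+1},x)$ from Lemma \ref{lem:basic1}, where $\Omega_1(F,y_{k+1},x) = F(y_{k+1}) + \langle \nabla F(y_{k+1}), x - y_{k+1}\rangle$ is the first-order Taylor model of $F$ at $y_{k+1}$ and $\nabla F(y_{k+1}) = \nabla f(y_{k+1}) + g'(y_{k+1})$ in the composite sense. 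This gives
\[
\psi_{k+1}(x) - A_{k+1}F(y_{k+1}) = \psi_k(x) + a_{k+1}\langle \nabla F(y_{k+1}), x - y_{k+1}\rangle + (a_{k+1} - A_{k+1})F(y_{k+1}),
\]
and since $a_{k+1} - A_{k+1} = -A_k$, the last term is $-A_k F(y_{k+1})$.

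Next I would use the convexity of $F$ in the form $F(y_{k+1}) \geq F(z_k) + \langle \nabla F(y_{k+1}), y_{k+1} - z_k\rangle$ to replace $-A_k F(y_{k+1})$ by $-A_k F(z_k) - A_k\langle \nabla F(y_{k+1}), y_{k+1} - z_k\rangle$; this is exactly where the $z_k$-sequence and the telescoping term $\psi_k(x_k) - A_k F(z_k)$ enter. Collecting the two inner-product contributions produces the coefficient $a_{k+1}(x - y_{k+1}) + A_k(z_k - y_{k+1}) = A_{k+1}\big(\tfrac{a_{k+1}}{A_{k+1}}x + \tfrac{A_k}{A_{k+1}}z_k - y_{k+1}\big)$ against $\nabla F(y_{k+1}) = \nabla f(y_{k+1}) + g'(y_{k+1})$, which is the first term on the right-hand side of the claim. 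It then remains to handle the residual $\psi_k(x) - \psi_k(x_k)$: since $\psi_k$ is a $1$-strongly convex quadratic (it is $\psi_0 = \tfrac12\|\cdot - x_0\|^2$ plus affine terms) minimized at $x_k$ by Lemma \ref{lem:basic1}, one has $\psi_k(x) - \psi_k(x_k) \geq \tfrac12\|x - x_k\|^2$, which supplies the final term.

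The main obstacle, and really the only delicate point, is making sure the composite subgradient $g'(y_{k+1})$ is treated consistently: it must be the \emph{same} element of $\partial g(y_{k+1})$ in the definition of $\psi_{k+1}$ (via $\Omega_1(F,\cdot)$), in the convexity inequality for $F$, and in the update of $x_{k+1}$ in Algorithm \ref{alg:highorder}. Because $g'(y_{k+1})$ is fixed by the optimality condition of the prox-step \eqref{prox_step}, this is consistent, and the subgradient inequality $g(z_k) \geq g(y_{k+1}) + \langle g'(y_{k+1}), z_k - y_{k+1}\rangle$ holds for that particular element; combined with the exact gradient inequality for the smooth part $f$, it yields the required convexity bound for $F$. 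No new estimates beyond those in \cite{bubeck2018near} are needed — the argument is structurally identical once $\nabla f$ is everywhere replaced by $\nabla f + g'$.
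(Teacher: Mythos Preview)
Your proposal is correct and essentially identical to the paper's argument: the paper uses the exact quadratic identity $\psi_k(x) = \psi_k(x_k) + \tfrac12\|x-x_k\|^2$ and the splitting $a_{k+1}\Omega_1 = A_{k+1}\Omega_1 - A_k\Omega_1$ together with convexity in the form $\Omega_1(F,y_{k+1},z_k) \le F(z_k)$, which is the same manipulation you describe up to the order of steps. Note only that the convexity inequality you cite should read $F(y_{k+1}) \le F(z_k) + \langle \nabla F(y_{k+1}), y_{k+1}-z_k\rangle$ (your stated direction is reversed), though the ``replacement'' you then perform on $-A_kF(y_{k+1})$ is consistent with the correct sign.
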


\begin{proof}
Firstly, by simple calculation we note that:
\[
\psi_k(x) = \psi_k(x_k) + \frac{1}{2} \|x- x_k\|^2, \text{ and } \psi_{k+1}(x) = \psi_k(x_k) + \frac{1}{2} \|x-x_k\|^2 + a_{k+1} \Omega_1(f, y_{k+1}, x) \,,
\]
so that 
\begin{equation} \label{eq:ind1}
\psi_{k+1}(x) - \psi_k(x_k) = a_{k+1} \Omega_1(F, y_{k+1}, x) + \frac{1}{2} \|x-x_k\|^2 \,.
\end{equation}
Now we want to make appear the term $A_{k+1} F(z_{k+1}) - A_k F(z_k)$ as a lower bound on the right hand side of \eqref{eq:ind1} when evaluated at $x=x_{k+1}$. 
Using the inequality 
 $\Omega_1(F, y_{k+1}, z_k) \leq f(z_k)$
 we have:
\begin{eqnarray*}
a_{k+1} \Omega_1(F, y_{k+1}, x) & = & A_{k+1} \Omega_1(F, y_{k+1}, x) - A_k \Omega_1(F, y_{k+1}, x) \\
& = & A_{k+1} \Omega_1(F, y_{k+1}, x) - A_k \nabla F(y_{k+1}) \cdot (x - z_k) - A_k \Omega_1(F, y_{k+1}, z_k) \\
 & = & A_{k+1} \Omega_1\left(F, y_{k+1}, x - \frac{A_k}{A_{k+1}} (x - z_k) \right ) - A_k \Omega_1(F, y_{k+1}, z_k) \\
 & \geq & A_{k+1} F(y_{k+1}) - A_k F(z_k)\\
 &+ &A_{k+1} (\nabla f(y_{k+1})+g'(y_{k+1})) \cdot \left(\frac{a_{k+1}}{A_{k+1}} x + \frac{A_k}{A_{k+1}} z_k - y_{k+1} \right ) \,,
\end{eqnarray*}
which concludes the proof.
\end{proof}

\begin{lemma} \label{lem:basic4}
Denoting $\lambda_{k+1} := \frac{a_{k+1}^2}{A_{k+1}}$ and $\tilde{x}_k := \frac{a_{k+1}}{A_{k+1}} x_{k} + \frac{A_k}{A_{k+1}} y_k$ one has:
\begin{align*}
& \psi_{k+1}(x_{k+1}) - A_{k+1} F(y_{k+1}) - (\psi_k(x_k) - A_k F(y_k)) \\
& \geq \frac{A_{k+1}}{2 \lambda_{k+1}} \bigg( \|y_{k+1} - \tilde{x}_k\|^2 - \|y_{k+1} - (\tilde{x}_k - \lambda_{k+1} (\nabla f(y_{k+1}))+g'(y_{k+1})) \|^2 \bigg) \,.
\end{align*}
In particular, we have in light of \eqref{eq:igdrefined}
$$\psi_{k}(x_{k})-A_{k}F(y_{k})\geq\frac{1-\sigma^{2}}{2}\sum_{i=1}^{k}\frac{A_{i}}{\lambda_{i}}\|y_{i}-\tilde{x}_{i-1}\|^{2}.$$
\end{lemma}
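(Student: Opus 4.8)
The statement to prove is Lemma~\ref{lem:basic4}, which sharpens the identity of Lemma~\ref{lem:basic3} by choosing the free point $x = x_{k+1}$ and $z_k = y_k$, and then recognizing the resulting expression as a difference of two squared norms.

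\textbf{Plan.} The plan is to start from Lemma~\ref{lem:basic3} specialized to $z_k = y_k$ and evaluated at the particular point $x = x_{k+1}$. This gives
\[
\psi_{k+1}(x_{k+1}) - A_{k+1} F(y_{k+1}) - (\psi_k(x_k) - A_k F(y_k)) \geq A_{k+1} (\nabla f(y_{k+1}) + g'(y_{k+1})) \cdot \left( \tfrac{a_{k+1}}{A_{k+1}} x_{k+1} + \tfrac{A_k}{A_{k+1}} y_k - y_{k+1} \right) + \tfrac{1}{2}\|x_{k+1} - x_k\|^2.
\]
Then I would use the update rule $x_{k+1} = x_k - a_{k+1}(\nabla f(y_{k+1}) + g'(y_{k+1}))$ to rewrite everything in terms of the gradient step. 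Writing $v := \nabla f(y_{k+1}) + g'(y_{k+1})$, the term $\tfrac{a_{k+1}}{A_{k+1}} x_{k+1} + \tfrac{A_k}{A_{k+1}} y_k$ equals $\tilde{x}_k - \tfrac{a_{k+1}^2}{A_{k+1}} v = \tilde{x}_k - \lambda_{k+1} v$ by definition of $\tilde{x}_k$ and $\lambda_{k+1}$, and $\|x_{k+1} - x_k\|^2 = a_{k+1}^2 \|v\|^2 = \lambda_{k+1} A_{k+1} \|v\|^2$. Substituting, the right-hand side becomes $A_{k+1}\, v \cdot (\tilde{x}_k - \lambda_{k+1} v - y_{k+1}) + \tfrac{\lambda_{k+1} A_{k+1}}{2} \|v\|^2 = \tfrac{A_{k+1}}{2\lambda_{k+1}}\big( 2\lambda_{k+1} v\cdot(\tilde{x}_k - y_{k+1}) - 2\lambda_{k+1}^2 \|v\|^2 + \lambda_{k+1}^2 \|v\|^2 \big)$.

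\textbf{The identity step.} The key algebraic move is to complete the square: observe that $\|y_{k+1} - \tilde{x}_k\|^2 - \|y_{k+1} - (\tilde{x}_k - \lambda_{k+1} v)\|^2 = \|y_{k+1} - \tilde{x}_k\|^2 - \|(y_{k+1} - \tilde{x}_k) + \lambda_{k+1} v\|^2 = -2\lambda_{k+1} v \cdot (y_{k+1} - \tilde{x}_k) - \lambda_{k+1}^2 \|v\|^2 = 2\lambda_{k+1} v\cdot(\tilde{x}_k - y_{k+1}) - \lambda_{k+1}^2\|v\|^2$, which is exactly the bracketed quantity above. This yields the displayed inequality of the lemma. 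For the ``in particular'' clause, I would note that $\psi_0(x_0) - A_0 F(y_0) = 0$ since $A_0 = 0$ and $\psi_0(x_0) = 0$, then telescope the per-step inequality from $i = 1$ to $k$, and apply the hypothesis \eqref{eq:igdrefined} to discard the negative term $\|y_{i} - (\tilde{x}_{i-1} - \lambda_i \nabla f(y_i))\|^2$ — being careful that \eqref{eq:igdrefined} is stated with $\nabla f(y_{k+1})$ alone rather than $v$, which is the version actually used in the outer algorithm, so the telescoped bound reads $\psi_k(x_k) - A_k F(y_k) \geq \tfrac{1-\sigma^2}{2}\sum_{i=1}^k \tfrac{A_i}{\lambda_i}\|y_i - \tilde{x}_{i-1}\|^2$.

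\textbf{Main obstacle.} The only genuine subtlety is bookkeeping consistency between the composite gradient $v = \nabla f(y_{k+1}) + g'(y_{k+1})$ used in the $x_k$ recursion (Lemma~\ref{lem:basic1}) and the plain gradient $\nabla f(y_{k+1})$ appearing in the contraction hypothesis \eqref{eq:igdrefined}; the squared-norm identity goes through with whichever vector is used in the $\psi$ recursion, so I must make sure the first display of Lemma~\ref{lem:basic4} is stated with $v$ (as it is) and that the subsequent bound only invokes \eqref{eq:igdrefined} in the form where $g'$ is absorbed into the tolerance. Everything else is the routine completion-of-the-square computation and a telescoping sum with $A_0 = 0$.
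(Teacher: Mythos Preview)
Your proof is correct and follows essentially the same route as the paper: specialize Lemma~\ref{lem:basic3} to $z_k=y_k$ and $x=x_{k+1}$, then complete the square. The only cosmetic difference is that the paper phrases the last step as evaluating $\min_{x}\{v\cdot(x-y_{k+1})+\tfrac{1}{2\lambda_{k+1}}\|x-\tilde{x}_k\|^2\}$ rather than plugging in $x_{k+1}$ directly, but since $x_{k+1}$ is exactly this minimizer the two computations coincide; your remark about the $\nabla f$ versus $\nabla f+g'$ discrepancy in \eqref{eq:igdrefined} correctly flags a typographical inconsistency in the paper itself.
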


\begin{proof}
We apply Lemma \ref{lem:basic3} with $z_k = y_k$ and $x=x_{k+1}$, and 
note that (with $\tilde{x} := \frac{a_{k+1}}{A_{k+1}} x + \frac{A_k}{A_{k+1}} y_k$): 
\begin{align*}
& (\nabla f(y_{k+1})+g'(y_{k+1})) \cdot \left(\frac{a_{k+1}}{A_{k+1}} x + \frac{A_k}{A_{k+1}} y_k - y_{k+1} \right )  + \frac{1}{2 A_{k+1}} \|x - x_k\|^2 \\
& = (\nabla f(y_{k+1})+g'(y_{k+1})) \cdot (\tilde{x} - y_{k+1}) + \frac{1}{2 A_{k+1}} \left\|\frac{A_{k+1}}{a_{k+1}} \left(\tilde{x} - \frac{A_k}{A_{k+1}} y_k \right) - x_k \right\|^2 \\
& = (\nabla f(y_{k+1})+g'(y_{k+1})) \cdot (\tilde{x} - y_{k+1}) + \frac{A_{k+1}}{2 a_{k+1}^2} \left\|\tilde{x} - \left(\frac{a_{k+1}}{A_k} x_k + \frac{A_k}{A_{k+1}} y_k \right) \right\|^2 \,.
\end{align*}
This yields:
\begin{align*}
& \psi_{k+1}(x_{k+1}) - A_{k+1} F(y_{k+1}) - (\psi_k(x_k) - A_k F(y_k)) \\
& \geq A_{k+1} \cdot \min_{x \in \R^d} \left\{ (\nabla f(y_{k+1})+g'(y_{k+1})) \cdot (x - y_{k+1}) + \frac{1}{2 \lambda_{k+1}} \|x - \tilde{x}_k\|^2 \right\} \,.
\end{align*}
The value of the minimum is easy to compute.
\end{proof}

For the first conclusion in Theorem \ref{thm:MS}, it suffices to combine Lemma \ref{lem:basic4} with Lemma \ref{lem:basic2}, and Lemma 2.5 from \cite{bubeck2018near}.
The second conclusion in Theorem \ref{thm:MS} follows from Lemma \ref{lem:basic4} and Lemma \ref{lem:basic1}.

The following lemma shows that minimizing the $p^{th}$ order Taylor expansion \eqref{prox_step} can be viewed as an implicit gradient step for some ``large'' step size:
\begin{lemma} \label{lem:controlstepsize}
Equation \eqref{eq:igdrefined} holds true with $\sigma = 1/2$ for \eqref{prox_step}, provided that one has: 
\begin{equation} \label{eq:key4}
\frac{1}{2} \leq \lambda_{k+1} \frac{L_p \cdot \|y_{k+1} - \tilde{x}_k\|^{p-1}}{(p-1)!}  \leq \frac{p}{p+1} \,.
\end{equation}
\end{lemma}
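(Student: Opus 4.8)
The statement to prove is Lemma~\ref{lem:controlstepsize}: the point $y_{k+1}$ defined by the composite tensor subproblem \eqref{prox_step} satisfies the inexact implicit-gradient-step condition \eqref{eq:igdrefined} with $\sigma = 1/2$, under the stepsize window \eqref{eq:key4}. The core idea, following \cite{bubeck2018near}, is to read off from the first-order optimality condition of \eqref{prox_step} an exact identity for the ``composite gradient mapping'' and then compare it term-by-term with $\nabla f(y_{k+1})$. First I would write the optimality condition for \eqref{prox_step}: since $g$ is convex and possibly nonsmooth, the $g'(y_{k+1})$ introduced in the paper is precisely the subgradient of $g$ at $y_{k+1}$ that makes
\[
\nabla \Omega_p(f,\tilde x_k; y_{k+1}) + \frac{(p+1)H_{p,f}}{p!}\|y_{k+1}-\tilde x_k\|^{p-1}(y_{k+1}-\tilde x_k) + g'(y_{k+1}) = 0.
\]
Hence $y_{k+1} - \tilde x_k = -\frac{p!}{(p+1)H_{p,f}\|y_{k+1}-\tilde x_k\|^{p-1}}\bigl(\nabla \Omega_p(f,\tilde x_k;y_{k+1}) + g'(y_{k+1})\bigr)$, which already says that $y_{k+1}-\tilde x_k$ is (a scalar multiple of) the composite gradient of the regularized model evaluated at $y_{k+1}$.

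**Key steps.** Next I would insert $\lambda_{k+1}(\nabla f(y_{k+1}) + g'(y_{k+1}))$ into the left side of \eqref{eq:igdrefined} and split:
\[
y_{k+1} - \bigl(\tilde x_k - \lambda_{k+1}(\nabla f(y_{k+1})+g'(y_{k+1}))\bigr)
= (y_{k+1}-\tilde x_k) + \lambda_{k+1}\bigl(\nabla f(y_{k+1})+g'(y_{k+1})\bigr).
\]
Using the optimality identity to substitute for $g'(y_{k+1})$, the $g'$ terms cancel up to the factor $\lambda_{k+1}$ versus $\frac{p!}{(p+1)H_{p,f}\|y_{k+1}-\tilde x_k\|^{p-1}}$, and what remains is
\[
\Bigl(1 - \lambda_{k+1}\tfrac{(p+1)H_{p,f}\|y_{k+1}-\tilde x_k\|^{p-1}}{p!}\Bigr)(y_{k+1}-\tilde x_k) + \lambda_{k+1}\bigl(\nabla f(y_{k+1}) - \nabla \Omega_p(f,\tilde x_k;y_{k+1})\bigr).
\]
(Here one must be careful with the normalization of the regularizer: the paper's ATD uses $\frac{H_{p,f}}{p!}\|\cdot\|^{p+1}$, whose gradient carries a factor $(p+1)$, so the scalar in the stepsize window \eqref{eq:key4} should be read consistently with how $\lambda_{k+1}$ is normalized in the algorithm — this bookkeeping is where I'd be most careful.) Now I bound the two pieces. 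For the first piece, by the stepsize window $\tfrac12 \le \lambda_{k+1}\tfrac{H_{p,f}\|y_{k+1}-\tilde x_k\|^{p-1}}{(p-1)!}\le \tfrac{p}{p+1}$ (as written in Algorithm~\ref{alg:highorder}), the coefficient $1 - \lambda_{k+1}\tfrac{(p+1)H_{p,f}\|y_{k+1}-\tilde x_k\|^{p-1}}{p!}$ has absolute value at most... — I would compute the exact constant from whichever normalization is in force, the point being it is $\le$ some explicit number $<1$. For the second piece, I use the Taylor remainder bound: $\nabla\Omega_p(f,\tilde x_k;\cdot)$ is the order-$(p-1)$ Taylor expansion of $\nabla f$, so Lipschitzness of $D^p f$ gives $\|\nabla f(y_{k+1}) - \nabla\Omega_p(f,\tilde x_k;y_{k+1})\| \le \frac{L_p}{p!}\|y_{k+1}-\tilde x_k\|^{p}$ (this is the gradient analogue of \eqref{eq_sumup}). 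Multiplying by $\lambda_{k+1}$ and again invoking the upper bound in \eqref{eq:key4} to convert $\lambda_{k+1}\|y_{k+1}-\tilde x_k\|^{p-1}$ into a constant times $(p-1)!/L_p$, this term is bounded by a constant times $\|y_{k+1}-\tilde x_k\|$. Adding the two bounds and checking the constants sum to at most $1/2$ finishes the proof.

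**Main obstacle.** The genuine mathematical content is light — it is essentially the triangle inequality plus the Taylor remainder estimate — so the real difficulty is \emph{the normalization bookkeeping}: the paper inconsistently writes the tensor regularizer as $\frac{H_{p,f}}{p!}\|\cdot\|^{p+1}$ in Algorithm~\ref{alg:highorder} but as $\frac{H_{p,f}}{(p+1)!}\|\cdot\|^{q+1}$ in later sections, and the scalar appearing in the stepsize window \eqref{eq:key4} versus in the algorithm's loop differ by how $L_p$ versus $H_{p,f}$ and the factorials are grouped. I would fix one convention at the outset, rederive the gradient of the regularizer under it, and track the factor $(p+1)$ through every line; getting the constant in $\sigma=1/2$ to actually close requires that $\lambda_{k+1}$ be chosen in the stated window \emph{relative to the true Lipschitz constant $L_p$} (hence the role of $H_{p,f}\ge 2L_p$, which gives the slack). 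A secondary subtlety is justifying that the $g'(y_{k+1})$ defined via the optimality condition is a legitimate subgradient of $g$ (needed so that Lemmas~\ref{lem:basic1}--\ref{lem:basic4}, which use $\Omega_1(F,y_{k+1},\cdot)\le F(\cdot)$, still apply); this follows because the first-order optimality condition for the convex problem \eqref{prox_step} forces the residual to lie in $\partial g(y_{k+1})$, exactly as in (2.9) of \cite{doikov2019local}.
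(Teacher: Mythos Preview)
Your approach is correct and is essentially identical to the paper's: both write the optimality condition of \eqref{prox_step} to express $y_{k+1}-\tilde x_k$ as a scalar multiple of $\nabla\Omega_p(f,\tilde x_k;y_{k+1})+g'(y_{k+1})$, split the left-hand side of \eqref{eq:igdrefined} into the Taylor-remainder piece $\lambda_{k+1}\bigl(\nabla f(y_{k+1})-\nabla\Omega_p(f,\tilde x_k;y_{k+1})\bigr)$ and the ``stepsize mismatch'' piece, and bound each using $\|\nabla f-\nabla\Omega_p\|\le \tfrac{L_p}{p!}\|y_{k+1}-\tilde x_k\|^p$ together with the window \eqref{eq:key4}. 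The only thing you leave implicit that the paper (almost) spells out is the final arithmetic: with $\eta:=\lambda_{k+1}\tfrac{L_p\|y_{k+1}-\tilde x_k\|^{p-1}}{(p-1)!}\in[\tfrac12,\tfrac{p}{p+1}]$ one has $\tfrac{\eta}{p}+\bigl|1-\tfrac{(p+1)\eta}{p}\bigr|=1-\eta\le\tfrac12$; and your worry about $H_{p,f}$ versus $L_p$ is a genuine notational inconsistency in the paper, but in its own proof the paper simply takes the regularizer constant equal to $L_p$ (so no extra slack from $H_{p,f}\ge 2L_p$ is actually used here).
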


\begin{proof}
Observe that the optimality condition gives:
\begin{equation} \label{eq:KKT_TD}
\nabla_y f_p(y_{k+1}, \tilde{x}_k) + \frac{L_p \cdot (p+1)}{p!} (y_{k+1} - \tilde{x}_k) \|y_{k+1} - \tilde{x}_k\|^{p-1} + g'(y_{k+1})= 0 \,. 
\end{equation}
In particular we get:
\begin{align*}
&y_{k+1} - (\tilde{x}_k - \lambda_{k+1} (\nabla f(y_{k+1})+ g'(y_{k+1})))  = \lambda_{k+1} (\nabla f(y_{k+1})+ g'(y_{k+1}))\\
&- \frac{p!}{L_p \cdot (p+1) \cdot \|y_{k+1} - \tilde{x}_k\|^{p-1}} (\nabla_y f_p(y_{k+1}, \tilde{x}_k)+g'(y_{k+1})) \,.
\end{align*}
By doing a Taylor expansion of the gradient function one obtains:
\[
\|\nabla f(y) - \nabla_y f_p(y, x)\| \leq \frac{L_p}{p!} \|y - x\|^p \,,
\]
so that we find:
\begin{align*}
& \|y_{k+1} - (\tilde{x}_k - \lambda_{k+1} (\nabla f(y_{k+1})+ g'(y_{k+1}))) \| \\
& \leq \lambda_{k+1} \frac{L_p}{p!} \|y_{k+1} - \tilde{x}_k\|^p + \left|\lambda_{k+1} - \frac{p!}{L_p \cdot (p+1) \cdot \|y_{k+1} - \tilde{x}_k\|^{p-1}} \right| \cdot \|\nabla_y f_p(y_{k+1}, \tilde{x}_k)+ g'(y_{k+1})\| \\
& \leq \|y_{k+1} - \tilde{x}_k\| \left(\lambda_{k+1} \frac{L_p}{p!} \|y_{k+1} - \tilde{x}_k\|^{p-1} + \left|\lambda_{k+1}\frac{L_p \cdot (p+1) \cdot \|y_{k+1} - \tilde{x}_k\|^{p-1}}{p!} - 1\right|  \right) \\
&=\|y_{k+1}-\tilde{x}_{k}\|\left(\frac{\eta}{p}+\left|\eta\cdot\frac{p+1}{p}-1\right|\right)
\end{align*}
where we used \eqref{eq:KKT_TD} in the second last equation and we let $\eta := \lambda_{k+1} \frac{L_p \cdot \|y_{k+1} - \tilde{x}_k\|^{p-1}}{(p-1)!}$ in the last equation.  The result follows from the assumption $1/2 \leq \eta \leq p/(p+1)$ in \eqref{eq:key4}. 
\end{proof}

Finally, if we replace $\|x^{\ast}\|$ by $\|x_0-x^{\ast}\|$ in Lemma 3.3 and use Lemma 3.4 from \cite{bubeck2018near} we prove Theorem \ref{thm:MS}.

\section{Inexact solution of the subproblem}\label{sec:inexact_sub}
Suppose that \eqref{prox_step} can not be solved exactly. Assume that we can find only inexact solution $\tilde{y}_{k+1}$ satisfies
\begin{equation}
\label{inexact}
    \left\|\nabla \left(f_p(\tilde{y}_{k+1}, \tilde{x}_k) + \frac{L_p}{p!} \|\tilde{y}_{k+1} - \tilde{x}_k\|^{p+1} +g(\tilde{y}_{k+1})\right)\right\| \le \frac{L_p}{2p!}\|\tilde{y}_{k+1}-\tilde{x}_k\|^p. 
\end{equation}

In this case Lemma~\ref{lem:controlstepsize} should be corrected.
\begin{lemma} \label{lem:controlstepsize1}
Equation \eqref{eq:igdrefined} holds true with $\sigma = 3/4$ for \eqref{inexact}, provided that one has: 
\begin{equation*} \label{eq:key5}
\frac{1}{2} \leq \lambda_{k+1} \frac{L_p \cdot \|\tilde{y}_{k+1} - \tilde{x}_k\|^{p-1}}{(p-1)!}  \leq \frac{p}{p+1} \,.
\end{equation*}
\end{lemma}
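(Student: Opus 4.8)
The plan is to repeat the computation in the proof of Lemma~\ref{lem:controlstepsize} essentially verbatim, keeping track of the extra error vector introduced by the inexact solve. Write $r := \|\tilde{y}_{k+1} - \tilde{x}_k\|$ and let
\[
e := \nabla_y f_p(\tilde{y}_{k+1}, \tilde{x}_k) + \frac{L_p (p+1)}{p!}\, r^{p-1}(\tilde{y}_{k+1} - \tilde{x}_k) + g'(\tilde{y}_{k+1})
\]
be the gradient of the subproblem objective at $\tilde{y}_{k+1}$, with $g'(\tilde{y}_{k+1})$ fixed (as in the exact case) so that this identity holds; by \eqref{inexact} one has $\|e\| \le \frac{L_p}{2p!}\, r^p$. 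Solving for the step gives $\tilde{y}_{k+1} - \tilde{x}_k = \mu\,(e - \nabla_y f_p(\tilde{y}_{k+1}, \tilde{x}_k) - g'(\tilde{y}_{k+1}))$ with $\mu := \frac{p!}{L_p (p+1)\, r^{p-1}}$, which is exactly the exact-case optimality identity perturbed by the term $\mu e$.

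Next I would plug this into $\tilde{y}_{k+1} - (\tilde{x}_k - \lambda_{k+1}(\nabla f(\tilde{y}_{k+1}) + g'(\tilde{y}_{k+1})))$ and regroup exactly as in Lemma~\ref{lem:controlstepsize}, obtaining the three-term decomposition
\[
\lambda_{k+1}(\nabla f(\tilde{y}_{k+1}) - \nabla_y f_p(\tilde{y}_{k+1}, \tilde{x}_k)) + (\lambda_{k+1} - \mu)(\nabla_y f_p(\tilde{y}_{k+1}, \tilde{x}_k) + g'(\tilde{y}_{k+1})) + \mu e .
\]
The first term is at most $\lambda_{k+1}\frac{L_p}{p!}\, r^p$ by the standard Taylor estimate $\|\nabla f(y) - \nabla_y f_p(y,x)\| \le \frac{L_p}{p!}\|y-x\|^p$; the norm of the bracket in the middle term equals $\| e - \frac{L_p(p+1)}{p!}\, r^{p-1}(\tilde{y}_{k+1}-\tilde{x}_k)\| \le \frac{L_p}{2p!}\, r^p + \frac{L_p(p+1)}{p!}\, r^p$; and the last term is at most $\mu\|e\| \le \frac{r}{2(p+1)}$.

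Finally, divide by $r$ and introduce $\eta := \lambda_{k+1}\frac{L_p\, r^{p-1}}{(p-1)!}$, so $\lambda_{k+1}\frac{L_p\, r^{p-1}}{p!} = \frac{\eta}{p}$ and $\mu\frac{L_p\, r^{p-1}}{p!} = \frac{1}{p+1}$; since $\eta \le \frac{p}{p+1}$ we get $\frac{\eta}{p} \le \frac{1}{p+1}$, i.e. $\lambda_{k+1}\le\mu$, hence $|\lambda_{k+1}-\mu|\frac{L_p\, r^{p-1}}{p!} = \frac{1}{p+1} - \frac{\eta}{p}$. Collecting the three bounds, $\frac{1}{r}\|\tilde{y}_{k+1} - (\tilde{x}_k - \lambda_{k+1}(\nabla f(\tilde{y}_{k+1}) + g'(\tilde{y}_{k+1})))\|$ becomes an affine and decreasing function of $\eta$ on the admissible range, hence is largest at $\eta = \frac12$, where it equals $\frac{p+2}{p+1} - \frac{2p+1}{4p}$. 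The main (though modest) obstacle is then the elementary check that this quantity is at most $\frac34$: it rearranges to $4p(p+2)\le(p+1)(5p+1)$, i.e. to $(p-1)^2\ge 0$, with equality precisely at $p=1$ --- which is exactly why the inexact step degrades the step-size constant from $\frac12$ to $\frac34$ and no further. Beyond that I only need to double-check the regrouping bookkeeping and that the tolerance $\frac{L_p}{2p!}\|\cdot\|^p$ in \eqref{inexact} is of genuinely lower order than the quantities it is subtracted from; everything else is a routine copy of Lemma~\ref{lem:controlstepsize}.
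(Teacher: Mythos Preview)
Your proposal is correct and follows essentially the same argument as the paper: introduce the subproblem gradient residual $e=\Xi_{k+1}$, carry it through the three-term decomposition inherited from the exact case, and bound each piece using the Taylor estimate and the range of $\eta$. The only cosmetic difference is the regrouping: the paper keeps the middle term as a scalar multiple of $\tilde y_{k+1}-\tilde x_k$ (so the extra error appears as $\lambda_{k+1}\|\Xi_{k+1}\|$), yielding the slightly tighter worst-case constant $\tfrac{2p+1}{4p}\le\tfrac34$, while your grouping (extra error $\mu\|e\|$ plus the triangle inequality on $\|\nabla_y f_p+g'\|$) yields $\tfrac{2p^2+5p-1}{4p(p+1)}\le\tfrac34$ --- both bounds hit $3/4$ exactly at $p=1$, confirming your final check.
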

\begin{proof}
Let's introduce 
$$\Xi_{k+1} = \nabla \left(f_p(\tilde{y}_{k+1}, \tilde{x}_k) + \frac{L_p}{p!} \|\tilde{y}_{k+1} - \tilde{x}_k\|^{p+1} +g(\tilde{y}_{k+1})\right).$$
The main difference with the proof of Lemma~\ref{lem:controlstepsize} is in the following line
\begin{align*}
& \|\tilde{y}_{k+1} - (\tilde{x}_k - \lambda_{k+1} (\nabla f(\tilde{y}_{k+1})+ g'(\tilde{y}_{k+1}))) \| \\
& \leq \lambda_{k+1} \frac{L_p}{p!} \|\tilde{y}_{k+1} - \tilde{x}_k\|^p + \\ &
\left|\lambda_{k+1} - \frac{p!}{L_p \cdot (p+1) \cdot \|\tilde{y}_{k+1} - \tilde{x}_k\|^{p-1}} \right| \cdot \|\nabla_y f_p(\tilde{y}_{k+1}, \tilde{x}_k) + g'(\tilde{y}_{k+1})\| +  \lambda_{k+1}\Xi_{k+1} \\
& \leq \|\tilde{y}_{k+1} - \tilde{x}_k\| \left(\lambda_{k+1} \frac{L_p}{p!} \|\tilde{y}_{k+1} - \tilde{x}_k\|^{p-1} + \left|\lambda_{k+1}\frac{L_p \cdot (p+1) \cdot \|\tilde{y}_{k+1} - \tilde{x}_k\|^{p-1}}{p!} - 1\right|  \right)  \\ 
&+\|\tilde{y}_{k+1} - \tilde{x}_k\|\cdot \frac{1}{2p}\cdot \lambda_{k+1} \frac{L_p \cdot \|\tilde{y}_{k+1} - \tilde{x}_k\|^{p-1}}{(p-1)!} .
\end{align*}
To complete the proof it's left to notice that due to the \eqref{inexact}
$$\|\Xi_{k+1}\|\le \frac{L_p}{2p!}\|\tilde{y}_{k+1}-\tilde{x}_k\|^p.$$
\end{proof}

Based on \eqref{inexact} we try to relate the accuracy $\tilde{\varepsilon}$ we need to solve auxiliary problem to the desired accuracy $\varepsilon$  for the problem~\eqref{eq_pr}. For this we use Lemma 2.1 from \cite{grapiglia2019inexact}. This Lemma guarantee that if
\begin{equation}
\label{inexact1}
    \left\|\nabla \left(f_p(\tilde{y}_{k+1}, \tilde{x}_k) + \frac{L_p}{p!} \|\tilde{y}_{k+1} - \tilde{x}_k\|^{p+1} +g(\tilde{y}_{k+1})\right)\right\| \le \frac{1}{4p(p+1)}\|\nabla F(\tilde{y}_{k+1})\|,
\end{equation}
then \eqref{inexact} holds true. So it's sufficient to solve auxiliary problem in terms of~\eqref{inexact1}.

Assume that $F(x)$ is $r$-uniformly convex function with constant $\sigma_r$ ($r\ge 2$, $\sigma_r > 0$, see Definition~\ref{r-uniform}), then from Lemma 2 \cite{doikov2019minimizing} we have
\begin{equation}\label{inexact2}
  F(\tilde{y}_{k+1}) - \min\limits_{x\in E}  F(x) \le \frac{r-1}{r}\left(\frac{1}{\sigma_r}\right)^{\frac{1}{r-1}} \|\nabla F(\tilde{y}_{k+1})\|^{\frac{r}{r-1}}.
\end{equation}
Inequalities~\eqref{inexact1},~\eqref{inexact2} give us guarantees that it's sufficient to solve auxiliary problem with the accuracy 
$$\tilde{\varepsilon} = O\left(\left(\epsilon^{r-1}\sigma_r\right)^{\frac{1}{r}}\right)$$
in terms of criteria \eqref{inexact1}.
Since auxiliary problem is every time $r$-uniformly convex we can apply \eqref{inexact2} to auxiliary problem to estimate the accuracy in terms of function discrepancy. Anyway we will have that there is no need to think about it since the dependence of this accuracy are logarithmic. The only restrictive assumption we made is that $F(x)$ is $r$-uniformly convex. If this is not a case, like in Section~\ref{convex}, we may use regularisation tricks
\cite{dvurechensky2019near}. This lead us to $\sigma_2 \sim \varepsilon$. So the dependence $\tilde{\varepsilon}$  becomes worthier, but this doesn't change the main conclusion about possibility to skip the details concern the accuracy of the solution of auxiliary problem.

\section{CATD with restarts}\label{sec:CATDproff}
The proof of the theorem \ref{CATDrestarts}.
\begin{proof}
As $F$ is $r$-uniformly convex function we get
\begin{align*}
    R_{k+1}&=\|z_{k+1}-x_{\ast}\| \leq \ls \frac{r \ls F(z_{k+1})-F(x_{\ast}) \rs}{\sigma_r} \rs^{\frac{1}{r}}
    \overset{\eqref{speedCATD}}{\leq} \ls \frac{r \ls \frac{c_p L_p R_{k}^{p+1}}{N_k^{\frac{3p+1}{2}}} \rs}{\sigma_r} \rs^{\frac{1}{r}}\\ &=\ls \frac{r c_p L_p R_{k}^{p+1}}{\sigma_r N_k^{\frac{3p+1}{2}}} \rs^{\frac{1}{r}}
    \overset{\eqref{numberofrestarts}}{\leq} \ls \frac{ R_{k}^{p+1}}{2^r R_k^{p+1-r} }\rs^{\frac{1}{r}} = \frac{ R_{k}}{2}.
\end{align*}
Now we compute the total number of CATD steps.
\begin{align*}
    \sum\limits_{k=0}^K N_k &\leq \sum\limits_{k=0}^K \ls \frac{r c_p L_p 2^r}{\sigma_r} R_k^{p+1-r} \rs^{\frac{2}{3p+1}}+K= \sum\limits_{k=0}^K \ls \frac{r c_p L_p 2^r}{\sigma_r} (R_0 2^{-k})^{p+1-r} \rs^{\frac{2}{3p+1}}+K\\
    &=\ls\frac{r c_p L_p 2^r R_0^{p+1-r}}{\sigma_r}\rs^{\frac{2}{3p+1}} \sum\limits_{k=0}^K  2^{\frac{-2(p+1-r)k}{3p+1}}+K
\end{align*}
\end{proof}
\end{document}